\newcommand{\ZZ}{\mathbb Z}
\newcommand{\PP}{\mathbb P}
\newcommand{\Xdif}[1]{\partial^{\mathcal{X}}\hspace{-3pt}\left({#1}\right)}
\newcommand{\Ydif}[1]{\partial^{\mathcal{Y}}\hspace{-3pt}\left({#1}\right)}
\newcommand{\ind}[1]{\mathbf{1}_{\left[{#1}\right]}}
\newcommand{\mprod}[1][]{*_{#1}}
\newcommand{\amput}[2][\mathcal{X}]{{#1} \mprod[R] {#2}}
\newcommand{\Mdif}[1]{\partial^{\mX * \mY}\hspace{-3pt}\left({#1}\right)}
\newcommand{\vv}[1]{\underline{{#1}}}
\newcommand{\Cone}[1]{\operatorname{Cone}\left({#1}\right)}
\newcommand{\Hm}[2]{\operatorname{H}_{#1}\left({#2}\right)}
\newcommand{\mI}{\mathcal{I}}
\newcommand{\mJ}{\mathcal{J}}
\newcommand{\mX}{\mathcal{X}}
\newcommand{\mY}{\mathcal{Y}}
\newcommand{\maxM}{\mathfrak{M}}
\DeclareMathOperator{\IM}{Im}
\newcommand{\cS}{\mathscr{S}}
\newcommand{\cT}{\mathscr{T}}
\DeclareMathOperator{\rank}{rank}
\newcommand{\Tor}[4][R]{\operatorname{Tor}^{#1}_{#2}\left(#3,#4\right)}
\theoremstyle{plain}
\newtheorem{theorem}{Theorem}
\newtheorem{prop}[theorem]{Proposition}
\newtheorem{cor}[theorem]{Corollary}
\newtheorem{lemma}[theorem]{Lemma}
\newtheorem{construct}[theorem]{Construction}
\newtheorem{note}[theorem]{Note}
\theoremstyle{definition}
\newtheorem{notation}[theorem]{Notation}
\newtheorem{example}[theorem]{Example}
\numberwithin{theorem}{section}
\numberwithin{equation}{section}
\numberwithin{equation}{theorem}
\begin{document}

\title{Minimal Free Resolutions of Fiber Products}
\author[H. Geller]{Hugh Geller$^1$}
\address{$^1$School of Mathematical and Statistical Sciences\\
Clemson University\\ Clemson, SC 29634 USA}
\email{hgeller@g.clemson.edu}
\urladdr{https://hughgeller.com}
\subjclass[2010]{Primary 13D02, secondary 13D07}

\thanks{This work was partially funded by a Clemson University Doctoral Dissertation Completion Grant.}

\begin{abstract}
We construct free resolutions for quotient rings $R/\langle \mathcal{I}', \mathcal{I}\mathcal{J}, \mathcal{J}'\rangle$, give conditions for the quotient to be realized as a fiber product, and give criteria for the construction to be minimal. We then specialize this result to fiber products over a field $k$ and recover explicit formulas for Betti numbers, graded Betti numbers, and Poincar\'{e} series.
\end{abstract}
\date{\today}
\maketitle


\section{Introduction}

Throughout the paper, let $(S, \maxM_S,k)$ and $(T, \maxM_T,k)$ both be commutative, local (or standard graded), notherian rings with $S \cong A/I'$ and $T \cong B/J'$ where $A$ and $B$ are regular local rings (or polynomial rings over $k$). Both $S$ and $T$ come equipped with a natural surjection $S \xrightarrow{\pi_S} k \xleftarrow{\pi_T} T$ which are used to build the fiber product of $S$ and $T$ over $k$ given by $S \times_k T := \{(s,t) \in S \times T: \pi_S(s) = \pi_T(t)\}$. Much research has been conducted comparing and contrasting the homological properties of $S$ and $T$ with those of $S \times_k T$, e.g., the Cohen-Macaulay, Gorenstein, Golod, finite representation type, and Arf properties (see \cite{celikbas2020weakly, MR2580452, freitas2019vanishing, MR647683, MR2488551, MR2711725, MR3691985, MR3862678, nasseh2018gorenstein,  MR3988200, MR3754407}).




We continue this line of inquiry by explicitly constructing a minimal free resolution of the fiber product $F = S \times_k T$ over an appropriate regular local ring or polynomial ring; see Theorem \ref{thm:minimal} and Corollary \ref{cor:fiber}. In particular, this construction yields the following formulas for Poincar\'e series; see Corollary \ref{cor:poin}.


\begin{theorem}\label{thm:main}
Set $\vv{x} = x_1, \ldots, x_m$ and $\vv{y} = y_1, \ldots, y_n$. Suppose $\mI' \subseteq \langle \vv{x} \rangle^2$ and $\mJ' \subseteq \langle \vv{y} \rangle^2.$ Consider either of the two following cases with $k$ a field;
\begin{enumerate}
	\item $A = k[\vv{x}]$ and $B = k[\vv{y}]$ with $R = k[\vv{x},\vv{y}]$;
	\item $A = k[[\vv{x}]]$ and $B = k[[\vv{y}]]$ with $R = k[[\vv{x},\vv{y}]]$.
\end{enumerate}
Set $S = A/I'$ and $T = B / J'$ and consider the fiber product $F:= S \times_k T$. Then one has the following formulas for Poincar\'{e} series: \[\frac{P_{F}^{R}(t) - P_{R/(\mI' + \maxM_B )}^{R}(t) - P_{R/(\maxM_A + \mJ')}^{R}(t) + P_{R/\maxM_R}^{R}(t)}{P_{\langle \vv{x} \vv{y} \rangle}^{R}(t)} = t(1+t)\] as well as \[\frac{P_{F}^{R}(t) - (1+t)^n P_S^{A}(t) - (1+t)^m P_T^{B}(t) + (1 + t)^{m + n}}{\left( (1 + t)^m - 1 \right) \left( (1 + t)^n - 1 \right)} = \frac{t + 1}{t}. \] 
\end{theorem}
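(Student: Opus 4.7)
The plan is to read off $P_F^R(t)$ from the explicit minimal free resolution of $F$ over $R$ produced by Theorem~\ref{thm:minimal} (specialized to the fiber-product setting via Corollary~\ref{cor:fiber}, taking $\mI = \maxM_A R$ and $\mJ = \maxM_B R$), and then to rearrange the result into the two stated identities.

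The first step is to show that the constructed resolution yields the Poincar\'e-series identity
\[
P_F^R(t) = P_{R/(\mI' + \maxM_B)}^R(t) + P_{R/(\maxM_A + \mJ')}^R(t) - P_{R/\maxM_R}^R(t) + t(1+t)\,P_{\langle \vv{x}\vv{y}\rangle}^R(t),
\]
which, after clearing denominators, is exactly the first displayed formula. Conceptually, the minimal resolution of $F$ glues together minimal resolutions of $R/(\mI' + \maxM_B) \cong S$ and $R/(\maxM_A + \mJ') \cong T$ along their common overlap, which is the Koszul resolution of $k = R/\maxM_R$, and then grafts on a copy of a minimal resolution of $\mI\mJ = \langle \vv{x}\vv{y}\rangle$ shifted by one and two homological degrees. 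The combined shift produces the factor $t + t^2 = t(1+t)$ and corresponds, respectively, to the generators $x_iy_j$ of $\mI\mJ$ and to the first layer of ``mixed'' syzygies introduced by the fiber-product structure.

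For the second formula one substitutes three standard identities into the first: $P_{R/\maxM_R}^R(t) = (1+t)^{m+n}$ (the Koszul complex on $\vv{x},\vv{y}$ minimally resolves $k$ since $R$ is regular); $P_S^R(t) = (1+t)^n P_S^A(t)$ and $P_T^R(t) = (1+t)^m P_T^B(t)$ (tensoring a minimal $A$-resolution of $S$ with the Koszul complex on $\vv{y}$, which is a minimal $R$-resolution of $A$, yields a minimal $R$-resolution of $S$; symmetrically for $T$); and
\[
P_{\langle \vv{x}\vv{y}\rangle}^R(t) = \frac{\bigl((1+t)^m - 1\bigr)\bigl((1+t)^n - 1\bigr)}{t^2}.
\]
The last identity follows from the fact that $\vv{x}$ is a regular sequence on $\maxM_B R$, which forces $\Tor{i}{R/\maxM_A R}{\maxM_B R} = 0$ for $i \geq 1$; consequently the natural surjection $\maxM_A R \otimes_R \maxM_B R \twoheadrightarrow \langle \vv{x}\vv{y}\rangle$ is an isomorphism, and a minimal free resolution of the latter is the tensor product of the two truncated Koszul complexes, with Poincar\'e series $\tfrac{(1+t)^m - 1}{t}$ and $\tfrac{(1+t)^n - 1}{t}$. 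Substitution and simplification yield the second formula.

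The main obstacle is the first step, namely extracting the central Poincar\'e-series identity from the construction of Theorem~\ref{thm:minimal}; one must verify in particular that the gluing factor for the $\langle\vv{x}\vv{y}\rangle$ contribution is exactly $t(1+t)$ and that no additional cancellations occur among the glued subcomplexes. Once this identity is secured, the remainder of the proof is generating-function arithmetic together with standard Koszul-complex computations.
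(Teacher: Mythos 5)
Your overall strategy matches the paper's: the proof of Theorem~\ref{thm:main} there is exactly Corollary~\ref{cor:poin} specialized via Corollary~\ref{cor:fiber}, and both parts of your outline --- read $P_F^R(t)$ off the explicit minimal resolution $\Cone{\Omega}$, then apply Koszul-complex formulas and Tor-independence to rearrange into the two stated identities --- track the paper's computation. Your derivation of the second identity from the first (via $P_{R/\maxM_R}^R = (1+t)^{m+n}$, $P_S^R = (1+t)^n P_S^A$, $P_T^R = (1+t)^m P_T^B$, and $t^2 P_{\langle\vv{x}\vv{y}\rangle}^R = ((1+t)^m - 1)((1+t)^n - 1)$) is precisely what the paper does, including the use of Tor-independence to justify the last identity.

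Where your write-up deviates is the ``gluing'' heuristic you offer for the first identity. The underlying graded module of $\Cone{\Omega}$ is literally
\[
\Cone{\Omega}^{\natural} \cong \bigl(\amput{\mY}\bigr)^{\natural} \oplus \bigl(\cS_{\geq 1}\otimes_R \mY\bigr)^{\natural} \oplus \bigl(\mX\otimes_R \cT_{\geq 1}\bigr)^{\natural},
\]
so the paper simply writes $P_F^R = P_{R/\mI\mJ}^R + (P_{R/\mI'}^R - 1)P_{R/\mJ}^R + P_{R/\mI}^R(P_{R/\mJ'}^R - 1)$ and then rearranges using the identity $t^2 P_{\mI\mJ}^R = (P_{R/\mI}^R - 1)(P_{R/\mJ}^R - 1)$. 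In contrast, your picture --- min resolution of $S$ plus min resolution of $T$, minus the shared Koszul resolution of $k$, plus two shifted copies of a resolution of $\langle\vv{x}\vv{y}\rangle$ --- produces the correct numbers, but it is not a faithful description of the summands of $\Cone{\Omega}^{\natural}$ (e.g., $\cS_{\geq 1}\otimes_R\mY$ is neither a resolution of $S$ nor a piece of one minus a piece of the Koszul complex); the agreement of Poincar\'e series is an arithmetic consequence of the identity above rather than a structural decomposition. You correctly flag this as the point needing verification; the direct route through the three-summand decomposition of $\Cone{\Omega}^{\natural}$ both proves the formula and eliminates the verification you anticipate.
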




Section 2 of this paper documents background material for use in Sections 3 and~4. In addition, this section contains a few results about indicator functions (inspired by the use of measure theory within probability and stochastics) which we use to reduce significantly the number of cases needed for our results and proofs.

Section 3 is devoted to Construction \ref{cons} that builds a minimal resolution over a polynomial ring for a quotient of that polynomial ring by certain products of ideals. One can then recover the results of \cite{MR2262383} on edge ideals of complete bipartite graphs by specializing Theorem \ref{thm:resn} using $\mI = \langle \vv{x}\rangle$ and $\mJ = \langle \vv{y} \rangle$.

Section 4 utilizes (hard) truncations, tensor products, and shifts in order to manipulate the minimal resolutions of $S$ and $T$ over distinct polynomial rings into resolutions over our desired ring. Constuction \ref{con:lift} shows how to build specific chain maps from these new resolutions to the one constructed in Theorem \ref{thm:genres} so that the associated mapping cones provide resolutions of the desired quotients.

Section 5 is dedicated to the criteria for and consequences of the construction in Theorem \ref{thm:genres} being minimal. Corollary \ref{cor:fiber} specifically identifies the application to fiber products. The section concludes by giving formulas for the Betti numbers (Corollary \ref{cor:betti}) and Poincar\'{e} series (Corollary \ref{cor:poin}) for the minimal construction.

\section{Notation and Background}

Throughout the paper, let $(R,\maxM_R)$ be a local or standard graded rings with flat ring. If $\cS$ is a chain complex of finite-rank free $R$-modules such that $\cS_i = 0$ for all $i < 0$, then its {\bf generating function} is \[\PP_{\cS}^{R}(t) = \sum_{n\geq 0} \rank_{R} \cS_n t^n.\] If $\cS$ is acyclic and minimal, then the {\bf Poincar\'{e} series} for $\Hm{0}{\cS}$ is $P_{\Hm{0}{\cS}}(t) = \PP_{\cS}(t)$.

We denote the $\ell$th {\bf suspension} (or {\bf shift}) of $\cS$ as $\Sigma^{\ell} \cS$. In the case $\ell = 1$, we set $\Sigma \cS := \Sigma^1 \cS$. We write $\cS_{\geq p}$ for the (hard) truncation of $\cS$ in degrees greater than or equal to $p$. This complex is given by
\begin{align*}
\left(\cS_{\geq p}\right)_m = \begin{cases} \cS_q  & q\geq p \\ 0 & q < p \end{cases} & & \hbox{ and } & & \partial_q^{\cS_{\geq p}} = \begin{cases} \partial_q^{\cS} & q > p \\ 0 & q\leq p \end{cases}
\end{align*}

Two $R$-modules $M$ and $N$ are said to be {\bf Tor-independent} if $\Tor{i}{M}{N} = 0$ for all $i \geq 1$. If $\mI, \mJ \subseteq R$ are ideals such that $R/\mI$ and $R/\mJ$ are $\operatorname{Tor}$-independent, then we say $\mI$ and $\mJ$ are $\operatorname{Tor}$-independent.


\begin{prop}\label{prop:fiber}
Consider the ideals $\mI' \subseteq \mI$ and $\mJ' \subseteq \mJ$, all in $R$, such that $\mI \cap \mJ = \mI\mJ$, e.g., such that $\mI$ and $\mJ$ are Tor-independent. Set $W = R / (\mI + \mJ)$. We then have \[\frac{R}{\langle \mI',\mI\mJ, \mJ' \rangle} \cong \frac{R}{\mI' + \mJ} \times_W \frac{R}{\mI + \mJ'}.\]
\end{prop}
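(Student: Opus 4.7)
The plan is to define a natural ring homomorphism $\phi$ from $R$ into the fiber product $F := R/(\mI' + \mJ) \times_W R/(\mI + \mJ')$ and apply the First Isomorphism Theorem. Writing $\pi_1$ and $\pi_2$ for the natural surjections of each factor onto $W = R/(\mI + \mJ)$ (which exist because $\mI' \subseteq \mI$ and $\mJ' \subseteq \mJ$), the assignment $\phi(r) = (r + (\mI' + \mJ),\, r + (\mI + \mJ'))$ lands in $F$ because both components reduce to the same class of $r$ in $W$.

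For surjectivity, I would take an arbitrary element $(a + (\mI' + \mJ),\, b + (\mI + \mJ')) \in F$; the fiber-product condition forces $a - b \in \mI + \mJ$, so I write $a - b = i + j$ with $i \in \mI$ and $j \in \mJ$, and set $r = a - j = b + i$. Then $r \equiv a \pmod{\mJ}$ and $r \equiv b \pmod{\mI}$, which lifts to $\phi(r) = (a + (\mI' + \mJ),\, b + (\mI + \mJ'))$ since $\mJ \subseteq \mI' + \mJ$ and $\mI \subseteq \mI + \mJ'$. Note that this step does not use the Tor-independence hypothesis.

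The remaining task, and the main obstacle, is to compute $\ker \phi = (\mI' + \mJ) \cap (\mI + \mJ')$ and show it equals $\langle \mI', \mI\mJ, \mJ' \rangle$. The containment $\supseteq$ is routine: each of $\mI'$, $\mI\mJ$, $\mJ'$ lies in both $\mI' + \mJ$ and $\mI + \mJ'$, using $\mI' \subseteq \mI$, $\mJ' \subseteq \mJ$, and $\mI\mJ \subseteq \mI \cap \mJ$. Conversely, given $r = i' + j_1 = i_1 + j'$ in the intersection (with $i' \in \mI'$, $i_1 \in \mI$, $j_1 \in \mJ$, $j' \in \mJ'$), the crucial observation is that $j_1 - j' = i_1 - i'$ lies in $\mI \cap \mJ$, and hence in $\mI\mJ$ by hypothesis; then $r = i' + j' + (j_1 - j')$ exhibits $r$ as an element of $\mI' + \mJ' + \mI\mJ$. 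This is the only place the equality $\mI \cap \mJ = \mI\mJ$ is invoked, and it is precisely what forces the kernel to have this compact description. For the Tor-independence alternative I would cite the standard identification $\Tor{1}{R/\mI}{R/\mJ} \cong (\mI \cap \mJ)/\mI\mJ$ to confirm that Tor-independence implies $\mI \cap \mJ = \mI\mJ$, so the two hypotheses align.
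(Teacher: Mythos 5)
Your proof is correct and follows essentially the same route as the paper: identify the fiber product as $R/\ker$ of the diagonal map, and show the kernel $(\mI'+\mJ)\cap(\mI+\mJ')$ equals $\mI'+\mI\mJ+\mJ'$ via the same decomposition (your $j_1 - j' = i_1 - i'$ is the paper's $-\beta$). The only difference is cosmetic: the paper invokes the universal property of the fiber product to produce the map $R \to F$ and leaves surjectivity as a check, while you construct the map and verify surjectivity directly, which is a nice bit of added explicitness.
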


\begin{proof}
Set $F = \frac{R}{\mI' + \mJ} \times_W \frac{R}{\mI + \mJ'} = S \times_W T$ and note that $\mI' \subseteq \mI$ and $\mJ' \subseteq \mJ$ gives us natural surjections $\pi_S: S \to W \leftarrow T :\pi_T$. Moreover, fiber products come with a universal mapping property making the following diagram commute
\[
\xymatrix{
R \ar@{-->}[rd]^-{\exists !\mu} \ar@/^1pc/[rrd]^{p_2} \ar@/_1pc/[ddr]_{p_1} & & & \\
 & F \ar[d] \ar[r] & T \ar[d]^-{\pi_T} \\
 & S \ar[r]^-{\pi_S} & W
}
\] where $p_1$ and $p_2$ are the natural surjections from $R$ to $S$ and $T$, respectively. It is straightforward to check that $\mu$ is surjective and yields the following isomorphism.
\[ F \cong \frac{R}{\ker \mu} = \frac{R}{\ker p_1 \cap \ker p_2} = \frac{R}{(\mI' + \mJ) \cap (\mI + \mJ')}.\] From here we observe that \[\mI' + \mI\mJ + \mJ' \subseteq (\mI' + \mJ) \cap (\mI + \mJ').\]

To obtain the other containment, we consider $\alpha \in (\mI' + \mJ) \cap (\mI + \mJ')$. We can then write $\alpha = \alpha_{\mI'} + \alpha_{\mJ} = \alpha_{\mI} + \alpha_{\mJ'}$ where the subscript denotes which ideal each element comes from, i.e., $\alpha_{\mI'} \in \mI'$. Set $\beta = \alpha_{\mI'} - \alpha_{\mI}$ and note that $\mI' \subseteq \mI$ and $\mJ' \subseteq \mJ$ means we have \[\beta = \alpha_{\mI'} - \alpha_{\mI} = \alpha_{\mJ'} - \alpha_{\mJ} \in \mI \cap \mJ = \mI\mJ.\] From here we can conclude that \[\alpha = \alpha_{\mI} + \alpha_{\mJ'} = \alpha_{\mI'} - \beta + \alpha_{\mJ'} \in \mI' + \mI\mJ + \mJ',\] which then gives the desired containment and isomorphism of rings.
\end{proof}

\begin{example}\label{ex:fiber}
Consider rings $S \cong k[[\vv{x}]]/I'$ and $T \cong k[[\vv{y}]]/J'$. Set $R = k[[\vv{x},\vv{y}]]$ and consider the ideals of $\mI' = I'R$ and $\mJ' = J'R$. We then have $S \cong R/(\mI' + \langle \vv{y} \rangle)$ and $T \cong R/(\langle \vv{x} \rangle + \mJ')$. Combining these isomorphisms with Proposition \ref{prop:fiber} yields the following.\[S \times_k T \cong \frac{R}{\mI' + \langle \vv{y} \rangle} \times_{\frac{R}{\langle \vv{x} \rangle + \langle \vv{y} \rangle}} \frac{R}{\langle \vv{x} \rangle + \mJ'} \cong \frac{R}{\langle \mI', \vv{x} \vv{y}, \mJ'\rangle}.\] We further note that this isomorphism holds if the power series rings are replaced with polynomial rings.
\end{example}


Throughout this paper, we utilize indicator functions $\ind{*}$. These functions return 1 if the input makes the statement $*$ true and 0 if false. In particular, we make use of the following properties.

\begin{lemma}\label{lem:ind} 
Let $W_1$ and $W_2$ be statements that can be evaluated as true or false, and let $W_1^c$ represent ``not $W_1$". Then the following hold.
\begin{enumerate}[\rm(a)]
\item \label{item:AnotA} $1 = \ind{W_1} + \ind{W_1^c}$
\item \label{item:mult} $\ind{W_1 \cap W_2} = \ind{W_1} \ind{W_2}$ 
\item \label{item:or} $\ind{W_1 \cup W_2} = \ind{W_1} + \ind{W_2} - \ind{W_1 \cap W_2}$ 
\item \label{item:redun} If $W_1$ implies $W_2$, then $\ind{W_1}\ind{W_2} = \ind{W_1}$, and we say that $\ind{W_2}$ is redundant. 
\item \label{item:AnotB} If $W_1$ implies $W_2^c$, then $\ind{W_1}\ind{W_2} = 0$. 
\item \label{item:eval} If $W_1$ is a logical statement such that $W_1$ implies $x = x_0$, then for any function $f$ with $x_0$ in the domain of $f$, we have $\ind{W_1}f(x) = \ind{W_1}f(x_0)$. 
\end{enumerate}

\end{lemma}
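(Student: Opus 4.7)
The plan is to verify each identity by direct case analysis on the truth values of the statements involved. Since each statement is either true or false, there are at most four cases per identity, and the hypotheses of parts (d)--(f) further restrict which cases actually occur. Throughout, I would use only the defining property that $\ind{W}$ equals $1$ when $W$ holds and $0$ otherwise.

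For (a), splitting on whether $W_1$ holds gives $\ind{W_1} + \ind{W_1^c}$ equal to $1 + 0$ or $0 + 1$, which is $1$ in either case. For (b), enumerating the four truth assignments for $(W_1, W_2)$ shows that $\ind{W_1}\ind{W_2}$ equals $1$ exactly when both statements hold and $0$ otherwise, matching the definition of $\ind{W_1 \cap W_2}$. Part (c) is then the inclusion-exclusion identity; I would again run through the four rows of the truth table, or equivalently observe that the only discrepancy between $\ind{W_1} + \ind{W_2}$ and $\ind{W_1 \cup W_2}$ occurs when both $W_1$ and $W_2$ hold, in which case the former contributes $2$ rather than $1$, and this overcount is corrected exactly by subtracting $\ind{W_1 \cap W_2}$ as given in (b).

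For (d), the hypothesis $W_1 \Rightarrow W_2$ eliminates the case where $W_1$ is true and $W_2$ is false, so in each of the three remaining cases $\ind{W_1}\ind{W_2}$ and $\ind{W_1}$ take the same value. Part (e) reduces the same way: $W_1 \Rightarrow W_2^c$ eliminates the case where both hold, so in every surviving case at least one of $\ind{W_1}$, $\ind{W_2}$ vanishes and their product is $0$. For (f), I would again split on whether $W_1$ holds: if not, both sides vanish because of the common factor $\ind{W_1}$; if so, the hypothesis forces $x = x_0$, whence $f(x) = f(x_0)$ and both sides equal $f(x_0)$.

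There is no substantive obstacle here, only bookkeeping. The main thing to watch is that each hypothesis is applied only in the cases it was designed to exclude, so that one does not inadvertently evaluate $f(x)$ in (f) at a point where $x = x_0$ has not been forced. If a more compact presentation is desired, parts (a)--(c) can be collapsed into a single two-by-two truth table, and (d)--(f) can then be read off as immediate consequences of that table together with their respective hypotheses.
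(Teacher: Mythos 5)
Your proposal is correct and matches the paper's own argument: the paper also proves part (a) by splitting on the truth value of $W_1$, states that (b)--(e) follow by the same case analysis, and handles (f) by a case split showing $\ind{W_1}\bigl(f(x)-f(x_0)\bigr)=0$. No substantive difference in approach.
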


\begin{proof}
We prove part \eqref{item:AnotA} by cases. If $W_1$ is true, then $W_1^c$ is false and we have $\ind{W_1} + \ind{W_1^c} = 1 + 0 = 1$. The case where $W_1$ is false follows similarly.

The proofs of \eqref{item:mult} through \eqref{item:AnotB} can be proven similarly. To prove \eqref{item:eval} in a similar fashion, we first set $g(x) = \ind{W_1}f(x) - \ind{W_1}f(x_0) = \ind{W_1}\left(f(x) - f(x_0)\right)$. One can then use cases to show $g(x) = 0$ for all $x$, which returns the desired result.
\end{proof}

\section{Free Resolutions for Products of Certain Ideals}

We next consider a construction that takes the minimal resolutions of $R/\mI$ and $R/\mJ$ over $R$ and outputs the minimal resolution of $R/ \mI \mJ$. We then specialize the construction in a corollary to reproduce the minimal, cellular resolution constructed in \cite{MR2262383} for fiber products of the form $k[\vv{x}] \times_k k[\vv{y}] \cong k[\vv{x}, \vv{y}]/\langle \vv{x} \vv{y} \rangle$.


\begin{construct}\label{cons}
Let $\mX$ and $\mY$ be complexes of free $R$-modules. The {\bf star product} of $\mX$ and $\mY$ over $R$, denoted $\amput{\mY}$, is the chain complex given by
\begin{align*}
\left(\amput{\mY} \right)_n = \begin{cases} \left(\mX_{\geq 1} \otimes_R \mY_{\geq 1}\right)_{n+1} & n\geq 1 \\ \mX_0 \otimes_R \mY_0 & n=0 \\ 0 & n < 0 \end{cases} & & \hbox{ and } & & \partial_n^{\mX \mprod \mY} = \begin{cases} \partial_{n+1}^{\mX_{\geq 1} \otimes \mY_{\geq 1}} & n \geq 2 \\ \partial_1^{\mX}\otimes \partial_1^{\mY} & n = 1 \\ 0 & n \leq 0 \end{cases}. 
\end{align*}
\end{construct}

In summary, $\amput{\mY}$ is obtained by truncating $\mX$ and $\mY$, tensoring the truncations, then shifting and augmenting the tensor product. In particular, it is straightforward to show that $\amput{\mY}$ is a bounded below complex of free $R$-modules. We denote simple tensors of positive degree in $\amput{\mY}$ as $\alpha * \beta$ where $\alpha \in \mX$ and $\beta \in \mY$.

\begin{example}\label{ex:star}
Set $\mX = K^{k[\vv{x},\vv{y}]}(\vv{x})$ be the Koszul complex for $\vv{x} = x_1, \ldots, x_m$. Similarly, set $\mY = K^{k[\vv{x},\vv{y}]}(\vv{y})$ be the Koszul complex for $\vv{y} = y_1, \ldots, y_n$. Then, for $R = k[\vv{x},\vv{y}]$, we have $\left(\amput{\mY} \right)_0 \cong k[\vv{x},\vv{y}]$ and, for $\ell \geq 1$, \[\left(\amput{\mY} \right)_{\ell} = \bigoplus_{t = 1}^{\ell} k[\vv{x}, \vv{y}]^{\binom{m}{t}}\otimes_R k[\vv{x},\vv{y}]^{\binom{n}{\ell + 1 - t}} \cong k[\vv{x}, \vv{y}]^{\binom{m + n}{\ell + 1} - \binom{m}{\ell + 1} - \binom{n}{\ell + 1}}.\]

In particular, if we set $R= k[\vv{x},\vv{y}]$ and consider $m = 2 = n$, then $\amput{\mY}$ has the following form.
\[
\xymatrix{
0 \to R \ar[r]^-{\left(\begin{smallmatrix} - x_2 \\ x_1 \\ -y_2 \\ y_1 \end{smallmatrix}\right)} & R^4 \ar[rrrr]^-{\left(\begin{smallmatrix} y_2 & 0 & -x_2 & 0 \\ -y_1 & 0 & 0 & -x_2 \\ 0 & y_2 & x_1 & 0 \\ 0 & -y_1 & 0 & x_1 \end{smallmatrix}\right)} &&&& R^4 \ar[rrrr]^-{\left(\begin{smallmatrix} x_1y_1 & x_1y_2 & x_2y_1 & x_2y_2 \end{smallmatrix}\right)} &&&& R \to 0
}
\]
\end{example}



The following lemma allows to express the differential $\displaystyle \partial^{\amput{\mY}}$ in terms of $\partial^{\mX}$, $\partial^{\mY}$, and indicator functions. In particular, it removes the need to call on the truncated complexes and will simplify later proofs.

\begin{lemma}\label{lem:mdif}
For $n\geq 1$, the differential $\partial^{\mX \mprod \mY}$ acts on $\left(\amput{\mY} \right)_n$ by 
\begin{align*}
\Mdif{ a \mprod b} =& \ind{|a| > 1} \Xdif{a} \mprod b + \ind{|b| > 1}(-1)^{|a|} a \mprod \Ydif{b} + \ind{|a|,|b| = 1} \Xdif{a} \mprod \Ydif{b}.
\end{align*}

\end{lemma}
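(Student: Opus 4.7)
The plan is to prove the formula by splitting into the two cases dictated by the definition of $\partial^{\mX \mprod \mY}$, namely $n = 1$ and $n \geq 2$, and then verifying that the indicator-function bookkeeping matches in each case. Because $a \mprod b$ is a simple tensor of degree $n \geq 1$ in $\amput{\mY}$, the definition forces $|a|, |b| \geq 1$ and $|a| + |b| = n + 1$.

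First, consider $n = 1$. Then $|a| + |b| = 2$ combined with $|a|, |b| \geq 1$ forces $|a| = |b| = 1$, so the indicators $\ind{|a|>1}$ and $\ind{|b|>1}$ are both zero while $\ind{|a|,|b|=1}$ equals one. On the other hand, by definition $\partial_1^{\mX \mprod \mY}(a \mprod b) = (\partial_1^{\mX} \otimes \partial_1^{\mY})(a \otimes b) = \Xdif{a} \mprod \Ydif{b}$, matching the claimed formula.

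Next, consider $n \geq 2$. Here the differential is the tensor product differential on $\mX_{\geq 1} \otimes \mY_{\geq 1}$, so by the graded Leibniz rule
\[ \partial_{n+1}^{\mX_{\geq 1} \otimes \mY_{\geq 1}}(a \otimes b) = \partial^{\mX_{\geq 1}}(a) \otimes b + (-1)^{|a|} a \otimes \partial^{\mY_{\geq 1}}(b). \]
From the definition of the truncation, $\partial^{\mX_{\geq 1}}(a) = \ind{|a|>1}\Xdif{a}$ and analogously for $\mY$, which reproduces the first two terms of the claimed formula. Finally, since $n \geq 2$ forces $|a| + |b| \geq 3$, the event $|a| = |b| = 1$ is impossible, so $\ind{|a|,|b|=1} = 0$ and the third term contributes zero; hence the full formula still holds.

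No single step is a serious obstacle: the work is essentially a definition chase combined with the elementary identities of Lemma \ref{lem:ind}. The only point requiring minor care is to verify that the indicator $\ind{|a|,|b|=1}$ in the $n \geq 2$ case genuinely vanishes, which follows from the degree constraint $|a|+|b| = n+1 \geq 3$; this can be phrased using Lemma \ref{lem:ind}\eqref{item:AnotB} by noting that $n \geq 2$ implies $(|a| = |b| = 1)^c$.
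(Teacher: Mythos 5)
Your proof is correct and takes essentially the same approach as the paper: both reduce to the dichotomy $n=1$ versus $n\geq 2$ (the paper phrases it as $\ind{n=1}+\ind{n\neq 1}$, you phrase it as two explicit cases) and then unwind the truncated differentials into indicator-weighted terms. The content is a straightforward definition chase in either presentation.
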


\begin{proof}
Since $n$ is a positive integer, we have $\ind{n \neq 1} = \ind{|a| + |b| - 1 \geq 2}$. Moreover, we have $n=1$ if and only if $|a| = 1 = |b|$, thus $\ind{n = 1} = \ind{|a|,|b| = 1}$. Thus, parts \eqref{item:AnotA} and \eqref{item:eval} of Lemma \ref{lem:ind} allow us to conclude \[\partial_n^{\mX \mprod \mY}\hspace{-2pt}\left( a \mprod b\right) = \ind{|a| + |b| \geq 3}\hspace{-3pt}\left(\partial_{|a|}^{\mX_{\geq 1}}\hspace{-2pt}(a) \mprod b  + (-1)^{|a|} a \mprod \partial_{|b|}^{\mY_{\geq 1}}\hspace{-2pt}(b) \right) + \ind{|a|,|b| = 1} \Xdif{a} \mprod \Ydif{b}.\]

Lemma \ref{lem:ind}\eqref{item:AnotA}, \eqref{item:AnotB}, and \eqref{item:eval} implies that $\partial_{|a|}^{\mX_{\geq 1}}(a) \mprod b = \ind{|a| > 1}\Xdif{a} \mprod b$ and $a \mprod \partial_{|b|}^{\mY_{\geq 1}}(b) = \ind{|b| > 1} a \mprod \Ydif{b}$. Using Lemma \ref{lem:ind}\eqref{item:redun}, we note that $\ind{|a| + |b| \geq 3}$ is redundant in the presence of both $\ind{|a| > 1}$ and $\ind{|b| > 1}$ and can then be dropped to produce the desired formula.
\end{proof}

Revisiting Example \ref{ex:star}, one can check that the complex given in the case of $m,n = 2$ is in fact a free resolution of $k[\vv{x},\vv{y}]/\langle \vv{x}\vv{y}\rangle$ over $k[\vv{x},\vv{y}]$. This matches with the construction by Visscher in \cite{MR2262383}, which is a particular case in the following theorem. It should also be noted that the following theorem was proved independently and simultaneously by VandeBogert in \cite{vandebogert2020vanishing}.

\begin{theorem}\label{thm:resn}
Let $\mX$ and $\mY$ be free resolutions of $R / \mI$ and $R / \mJ$ over $R$, respectively. If $\mI,\mJ \subseteq R$ are $\operatorname{Tor}$-independent ideals, then the star product $\amput{\mY}$ is a free resolution of $R / \mI\mJ $ over $R$. Moreover, if $\mX$ and $\mY$ are minimal, then $\amput{\mY}$ is also minimal.
\end{theorem}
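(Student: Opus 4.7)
The plan is to compare the star product with the ordinary tensor complex $\mX_{\geq 1}\otimes_R\mY_{\geq 1}$, whose homology is controlled by $\Tor{\ast}{\mI}{\mJ}$, and to handle separately the bottom degree where the differentials disagree. The computation of $H_0$ is immediate from Lemma \ref{lem:mdif}: the formula $\partial_1(a\mprod b)=\partial^{\mX}(a)\partial^{\mY}(b)$ shows that as $a,b$ range over bases of $\mX_1,\mY_1$, these products generate $\mI\mJ$, giving $H_0(\amput{\mY})=R/\mI\mJ$.

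For $n\geq 2$ the identities $(\amput{\mY})_n=(\mX_{\geq 1}\otimes_R\mY_{\geq 1})_{n+1}$ and $\partial_n^{\amput{\mY}}=\partial_{n+1}^{\mX_{\geq 1}\otimes_R\mY_{\geq 1}}$ imply $H_n(\amput{\mY})\cong H_{n+1}(\mX_{\geq 1}\otimes_R\mY_{\geq 1})$. The complex $\mX_{\geq 1}$ consists of free $R$-modules and has $\mI$ as its only nonzero homology, concentrated in degree $1$; likewise $\mY_{\geq 1}$ has $\mJ$ in degree $1$. Hence $\mX_{\geq 1}\otimes_R\mY_{\geq 1}$ computes the derived tensor product of $\mI$ and $\mJ$ shifted by $2$, so
\[
H_n(\mX_{\geq 1}\otimes_R\mY_{\geq 1})\cong\Tor{n-2}{\mI}{\mJ}.
\]
Applying the long exact sequences of Tor to $0\to\mI\to R\to R/\mI\to 0$ and $0\to\mJ\to R\to R/\mJ\to 0$ and using the Tor-independence hypothesis, one obtains $\Tor{i}{\mI}{\mJ}=0$ for every $i\geq 1$. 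Thus $H_n(\amput{\mY})=0$ for all $n\geq 2$.

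The main subtlety is the case $n=1$, because $\partial_1^{\amput{\mY}}$ is nonzero while $\partial_2^{\mX_{\geq 1}\otimes_R\mY_{\geq 1}}=0$ by construction. A direct check via Lemma \ref{lem:mdif} shows $\partial_1^{\amput{\mY}}\circ\partial_3^{\mX_{\geq 1}\otimes_R\mY_{\geq 1}}=0$, so $\partial_1^{\amput{\mY}}$ descends to a map from $H_2(\mX_{\geq 1}\otimes_R\mY_{\geq 1})\cong\mI\otimes_R\mJ$ to $R$; unwinding the identifications, this induced map is the multiplication $\mI\otimes_R\mJ\to R$, whose kernel is $\Tor{1}{R/\mI}{\mJ}$ and which vanishes under Tor-independence. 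Hence $H_1(\amput{\mY})=0$.

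Minimality is a direct consequence of Lemma \ref{lem:mdif}: if $\mX$ and $\mY$ are minimal, then the entries of $\partial^{\mX}$ and $\partial^{\mY}$ lie in $\maxM_R$, so the first two formulas in the lemma yield entries of $\partial^{\amput{\mY}}$ in $\maxM_R$; in the bottom case $|a|=|b|=1$ the entry is a product $\partial^{\mX}(a)\partial^{\mY}(b)\in\maxM_R^2\subseteq\maxM_R$.
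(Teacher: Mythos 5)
Your proof is correct and follows essentially the same strategy as the paper: identify $\Hm{n}{\amput{\mY}}$ for $n\geq 2$ with a shifted Tor module of $\mI$ and $\mJ$ and kill it by Tor-independence, then treat $n=1$ and $n=0$ directly, and read off minimality from Lemma~\ref{lem:mdif}. The one place you diverge is the $n=1$ step: the paper first establishes that the multiplication $\mu\colon\mI\otimes_R\mJ\to\mI\mJ$ is injective and then invokes a result from Matsumura on the kernel of a tensor product of surjections to conclude $\ker\partial_1^{\amput{\mY}}=\IM\partial_2^{\amput{\mY}}$, whereas you observe that $\partial_2^{\mX_{\geq 1}\otimes_R\mY_{\geq 1}}=0$ forces $\Hm{1}{\amput{\mY}}$ to coincide with the kernel of the map $\mI\otimes_R\mJ\to R$ induced by $\partial_1^{\amput{\mY}}$, and then identify that kernel with $\Tor[R]{1}{R/\mI}{\mJ}\cong\Tor[R]{2}{R/\mI}{R/\mJ}=0$. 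Your route is a bit more self-contained (no appeal to the Matsumura lemma) and makes the $n=1$ case look like a degenerate instance of the $n\geq 2$ argument; the paper's route is more hands-on at the chain level. Both are valid, and your minimality argument using the three cases of Lemma~\ref{lem:mdif} is exactly the ``straightforward check'' the paper leaves to the reader.
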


\begin{proof}
To verify the first conclusion, we need only show that the construction is acyclic and resolves the desired ring. For exactness in degrees $n \geq 2$, we observe that
\begin{align*}
\Hm{n}{\amput{\mY}} &= \Hm{n+1}{\mX_{\geq 1} \otimes_R \mY_{\geq 1}} \\
 &= \Tor{n-1}{\mI}{\mJ} \\
 &\cong \Tor{n+1}{\frac{R}{\mI}}{\frac{R}{\mJ}} \\
 &= 0.
\end{align*}

To treat the case of $n = 1$, we consider the commutative diagram
\[
\xymatrix{
\mX_1 \otimes_R \mY_1 \ar[rr]^{\partial_1^{\amput{\mY}}} \ar[rd]^{\partial_1^{\mX} \otimes \partial_1^{\mY}} && \mI\mJ \\
 & \IM \partial_1^{\mX} \otimes_R \IM \partial_1^{\mY} = \mI \otimes_R \mJ \ar[ru]_-{\mu} &
}\]
and observe that $\ker \left( \partial_1^{\mX} \otimes \partial_1^{\mY} \right) \subseteq \ker \partial_1^{\amput{\mY}}$ with equality if the map $\mu$ sending $\alpha \otimes \beta \in \mI \otimes_R \mJ$ to $\alpha\beta \in \mI \mJ$ is injective. To show that $\mu$ is injective, consider the long exact sequence in Tor obtained by applying $\mI \otimes_R -$ to the short exact sequence \[0 \to \mJ \to R \to R/\mJ \to 0.\] This yields the exact sequence \[ 0 \to \underbrace{\Tor{1}{\mI}{\frac{R}{\mJ}}}_{\cong \Tor{2}{\frac{R}{\mI}}{\frac{R}{\mJ}}=0} \to \mI \otimes_R \mJ \to \underbrace{\mI \otimes_R R}_{\cong \mI} \to \underbrace{\mI \otimes_R R/\mJ}_{\cong \mI/\mI\mJ} \to 0\] from which we deduce the exact sequence \[\xymatrix{0 \ar[r] & \mI \otimes_R \mJ \ar[r]^-{\mu} & \mI\mJ \ar[r] & 0.}\] Thus, $\mu$ is an isomorphism so $\ker \left( \partial_1^{\mX} \otimes \partial_1^{\mY} \right) = \ker \partial_1^{\amput{\mY}}$.

We note that $\partial_1^{\mX}: \mX_1 \to \IM \partial_1^{\mX}$ and $\partial_1^{\mY}: \mY_1 \to \IM \partial_1^{\mY}$ are both surjections. It follows that \[
\xymatrix{
\mX_1 \otimes_R \mY_1 \ar[rr]^-{\partial_1^{\mX} \otimes \partial_1^{\mY}} & & \IM \partial_1^{\mX} \otimes_R \IM \partial_1^{\mY} \ar[r] & 0
} \] is exact. Furthermore, \cite[p. 267]{MR1011461} shows us that kernel of this surjection is given by 
\begin{align*}
\ker \partial_1^{\mX \mprod \mY} &= \ker \left( \partial_1^{\mX} \otimes \partial_1^{\mY} \right)\\
 & = \ker \partial_1^{\mX} \otimes_R \mY_1 + \mX_1 \otimes_R \ker \partial_1^{\mY} \\
 & = \IM \partial_2^{\mX} \otimes_R \mY_1 + \mX_1 \otimes_R \IM \partial_2^{\mY} \\
 & = \IM \partial_2^{\mX \mprod \mY}.
\end{align*}
Moreover, we note \[H_0\left(\amput{\mY}\right) = \frac{\mX_0 \otimes_R \mY_0}{\IM\left(\partial_1^{\mX} \otimes \partial_1^{\mY}\right)}  \cong \frac{R \otimes_R R}{\mI \otimes_R \mJ} \cong \frac{R}{ \mI \mJ }. \]
Lastly, it is straightforward to check minimality.
\end{proof}

As we discuss in the introduction, setting $\mI = \langle \vv{x} \rangle$ and $\mJ = \langle \vv{y} \rangle$ in the case of $R = k[\vv{x},\vv{y}]$ in Theorem \ref{thm:resn} recovers the main result of Visscher in \cite{MR2262383}. Next, we document consequences for Betti numbers.

\begin{cor} \label{cor:bettiprod}
We have the following formulas for the Betti numbers and graded Betti Numbers of $R/ \mI \mJ$ for $\ell > 0$.
\begin{align*}
 \beta_{\ell}^R(R/\mI \mJ ) &=  \sum_{i=1}^{\ell}\beta_i^R(R/\mI) \beta_{\ell + 1 - i}^R(R/\mJ) \\
 \beta_{\ell,k}^R(R/ \mI \mJ)  &= \sum_{i = 1}^{\ell}\sum_{j = 0}^k \beta_{i,j}^R(R/\mI)\beta_{\ell + 1 - i, k - j}^R(R/\mJ)
\end{align*}
\end{cor}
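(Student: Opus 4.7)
The plan is to read the formulas directly off Theorem \ref{thm:resn}. Take $\mX$ and $\mY$ to be minimal free resolutions of $R/\mI$ and $R/\mJ$ over $R$, so that $\rank_R \mX_i = \beta_i^R(R/\mI)$ and $\rank_R \mY_j = \beta_j^R(R/\mJ)$. By Theorem \ref{thm:resn}, the star product $\amput{\mY}$ is a minimal free resolution of $R/\mI\mJ$ over $R$, so $\beta_\ell^R(R/\mI\mJ) = \rank_R (\amput{\mY})_\ell$ for every $\ell > 0$.

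Next I would unwind the definition of the star product in positive homological degree. For $\ell \geq 1$, Construction \ref{cons} gives
\[
(\amput{\mY})_\ell = (\mX_{\geq 1} \otimes_R \mY_{\geq 1})_{\ell + 1} = \bigoplus_{\substack{i + j = \ell + 1 \\ i,j \geq 1}} \mX_i \otimes_R \mY_j = \bigoplus_{i=1}^{\ell} \mX_i \otimes_R \mY_{\ell + 1 - i}.
\]
Since tensor products of free modules over $R$ multiply ranks, taking $\rank_R$ on both sides yields the stated formula for $\beta_\ell^R(R/\mI\mJ)$.

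For the graded version, I would note that the only operation involved in passing from $\mX, \mY$ to $\amput{\mY}$ (in positive homological degree) is truncation, tensor product, and a homological shift. None of these alters the internal grading on generators: the internal degree of a generator $\alpha \mprod \beta$ equals $|\alpha|_{\text{int}} + |\beta|_{\text{int}}$. Thus, isolating the generators of internal degree $k$ in $\mX_i \otimes_R \mY_{\ell + 1 - i}$ gives rank $\sum_{j=0}^{k} \beta_{i,j}^R(R/\mI)\,\beta_{\ell+1-i, k-j}^R(R/\mJ)$, and summing over $1 \leq i \leq \ell$ produces the second displayed formula.

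I do not expect any real obstacle here: the content of the corollary is entirely a bookkeeping consequence of the decomposition of $(\amput{\mY})_\ell$ and of the minimality established in Theorem \ref{thm:resn}. The only step warranting a short remark is the verification that the homological shift in Construction \ref{cons} does not affect internal degrees, which is immediate from the definition of the shift.
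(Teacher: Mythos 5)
Your argument is correct and is essentially the paper's proof, just written out: the paper's entire proof is the single sentence ``Use the fact that $\amput{\mY}$ is minimal and obtained by truncating, tensoring, shifting, and augmenting minimal resolutions to get the desired formulas,'' and your decomposition $(\amput{\mY})_\ell = \bigoplus_{i=1}^{\ell} \mX_i \otimes_R \mY_{\ell+1-i}$ together with the rank/graded-rank bookkeeping is exactly the unwinding that sentence is asking the reader to do.
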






\begin{proof}
Use the fact that $\amput{\mY}$ is minimal and obtained by truncating, tensoring, shifting, and augmenting minimal resolutions to get the desired formulas. \end{proof}

We conclude this section by interpreting Corollary \ref{cor:bettiprod} in terms of Poincar\'e series.

\begin{cor}\label{cor:pon}
We have the following relationship of Poincar\'{e} series: \[ P_{R/\mI \mJ}^{R}(t) = 1 + \frac{1}{t}(P_{R/\mI}^{R}(t) - 1)(P_{R/\mJ}^{R}(t) - 1).\]
\end{cor}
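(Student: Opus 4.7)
The plan is to derive the Poincaré series identity directly from the Betti number formula established in Corollary \ref{cor:bettiprod}. First I would recall that for any finitely generated $R$-module $M$, the Poincaré series is $P_M^R(t) = \sum_{\ell \geq 0} \beta_\ell^R(M) t^\ell$, and that $\beta_0^R(R/\mI\mJ) = 1$ (assuming $\mI\mJ$ is a proper ideal, which is implicit once $\mI,\mJ$ are Tor-independent proper ideals in the setting of Theorem \ref{thm:resn}). Thus one can write
\[ P_{R/\mI\mJ}^R(t) - 1 = \sum_{\ell \geq 1} \beta_\ell^R(R/\mI\mJ)\, t^\ell. \]

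Next I would substitute the formula $\beta_\ell^R(R/\mI\mJ) = \sum_{i=1}^\ell \beta_i^R(R/\mI)\beta_{\ell+1-i}^R(R/\mJ)$ from Corollary \ref{cor:bettiprod} and reindex. Setting $j = \ell + 1 - i$, the pair $(i,j)$ runs independently over $i,j \geq 1$ with $\ell = i + j - 1$, so
\[ \sum_{\ell \geq 1}\sum_{i=1}^\ell \beta_i^R(R/\mI)\beta_{\ell+1-i}^R(R/\mJ)\, t^\ell = \frac{1}{t}\sum_{i\geq 1}\sum_{j\geq 1}\beta_i^R(R/\mI)\beta_j^R(R/\mJ)\, t^{i+j}. \]
The double sum factors as $\bigl(P_{R/\mI}^R(t) - 1\bigr)\bigl(P_{R/\mJ}^R(t) - 1\bigr)$, since subtracting the constant term $1$ from each Poincaré series gives precisely the sums starting at $i,j=1$. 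Rearranging then yields the stated identity.

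The argument is essentially bookkeeping, so there is no serious obstacle; the only point worth being careful about is justifying the interchange of sums (which is valid formally in $\ZZ[[t]]$) and making sure the index substitution is a genuine bijection between $\{(\ell, i) : \ell \geq 1,\, 1 \leq i \leq \ell\}$ and $\{(i,j) : i, j \geq 1\}$ via $j = \ell + 1 - i$. No case analysis or additional homological input is required beyond Corollary \ref{cor:bettiprod}.
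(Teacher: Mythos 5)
Your proof is correct and matches the paper's intent: the paper gives no explicit proof of Corollary \ref{cor:pon}, simply stating it is Corollary \ref{cor:bettiprod} ``interpreted in terms of Poincar\'e series,'' and your reindexing computation is precisely that translation carried out in full.
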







\section{Extending the Star Product}

In this section, we use $\amput{\mY}$ to build a resolution of $R/\langle \mI', \mI\mJ, \mJ'\rangle$ over $R$ where $\mI' \subseteq \mI$ and $\mJ' \subseteq \mJ$. Taking $R = k[[\vv{x},\vv{y}]]$, we will show how this specializes to fiber products of the form $k[[\vv{x}]]/I' \times_k k[[\vv{y}]]/J'$ and note the same can be done in the polynomial case.


\begin{notation}\label{sec4notation}
Throughout this section, we fix the following notation.
	\begin{enumerate}
		\item Let $\mX$ resolve $R/\mI$ over $R$.
		\item Let $\mY$ resolve $R/\mJ$ over $R$.
		\item Let $\cS$ resolve $R/\mI'$ over $R$.
		\item Let $\cT$ resolve $R/ \mJ'$ over $R$.
		\item \label{item:TorInd} The pairs $\{\mI,\mJ\}$, $\{\mI', \mJ\}$, and $\{\mI, \mJ'\}$ are $\operatorname{Tor}$-independent ideals in $R$.
	\end{enumerate}
\end{notation}

The next example shows that some ideals from Example \ref{ex:fiber} are Tor-independent for use in our proof of Theorem \ref{thm:main}.

\begin{example}\label{Thm:TorInd}
Suppose that we either have one of the following cases.
\begin{enumerate}
	\item $A = k[\vv{x}]$ and $B = k[\vv{y}]$ with $R = k[\vv{x},\vv{y}]$.
	\item $A = k[[\vv{x}]]$ and $B = k[[\vv{y}]]$ with $R = k[[\vv{x},\vv{y}]]$.
\end{enumerate}
If $M$ is an $A$-module and $N$ is a $B$-module, both finitely generated, then a standard prime filtration argument using induction on dimension shows that $M \otimes_A R$ and $N \otimes_B R$ are Tor-independent $R$-modules.
\end{example}

The Tor-independence in Notation \ref{sec4notation}\eqref{item:TorInd} is needed to ensure the vanishing of homology in Construction \ref{cons} and in the following lemma.

\begin{lemma}\label{lem:ideal}
Under the assumptions in Notation \ref{sec4notation}, we have $\Sigma^{-1}(\cS_{\geq 1} \otimes_R \mY)$ resolves $\mI' \cdot R/ \mI\mJ $ over $R$ while $\Sigma^{-1}(\mX \otimes_R \cT_{\geq 1})$ resolves $\mJ' \cdot R/ \mI\mJ$.
\end{lemma}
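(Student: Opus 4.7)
The plan is to recognize $\Sigma^{-1}\cS_{\geq 1}$ as a free resolution of the ideal $\mI'$ and then read off the homology of the lemma's complex as $\Tor^R_*(\mI',R/\mJ)$. First, the brutal truncation $\cS_{\geq 1}$ has $H_q(\cS_{\geq 1}) = H_q(\cS) = 0$ for $q \geq 2$ and $H_0(\cS_{\geq 1}) = 0$ trivially, while in degree $1$ we obtain
$$H_1(\cS_{\geq 1}) = \cS_1/\IM \partial_2^\cS = \IM \partial_1^\cS = \mI'$$
by exactness of $\cS$, using the standard convention $\cS_0 = R$ with augmentation $R \twoheadrightarrow R/\mI'$. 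After the shift $\Sigma^{-1}$ this places $\mI'$ in degree $0$, so $\Sigma^{-1}\cS_{\geq 1}$ is a free resolution of $\mI'$ over $R$.

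Since $\Sigma^{-1}\cS_{\geq 1}$ and $\mY$ are both bounded-below complexes of free modules, resolving $\mI'$ and $R/\mJ$ respectively, their tensor product computes the derived tensor product $\mI' \otimes^L_R R/\mJ$, giving
$$H_n\bigl(\Sigma^{-1}(\cS_{\geq 1} \otimes_R \mY)\bigr) \cong \Tor{n}{\mI'}{R/\mJ}.$$
Feeding $0 \to \mI' \to R \to R/\mI' \to 0$ into the Tor long exact sequence yields the dimension shift $\Tor{n}{\mI'}{R/\mJ} \cong \Tor{n+1}{R/\mI'}{R/\mJ}$ for every $n \geq 1$, and these Tors vanish by the Tor-independence of $\mI'$ and $\mJ$ recorded in Notation \ref{sec4notation}\eqref{item:TorInd}. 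In degree $0$ one is left with $\mI' \otimes_R R/\mJ \cong \mI'/\mI'\mJ$.

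It remains to identify $\mI'/\mI'\mJ$ with the target module $\mI' \cdot R/\mI\mJ = (\mI' + \mI\mJ)/\mI\mJ \cong \mI'/(\mI' \cap \mI\mJ)$. The same Tor-independence delivers the classical identification $(\mI' \cap \mJ)/\mI'\mJ \cong \Tor{1}{R/\mI'}{R/\mJ} = 0$, and the sandwich $\mI'\mJ \subseteq \mI' \cap \mI\mJ \subseteq \mI' \cap \mJ = \mI'\mJ$ collapses the middle term to $\mI'\mJ$, as required. This proves the first assertion; the statement about $\Sigma^{-1}(\mX \otimes_R \cT_{\geq 1})$ follows from the symmetric argument, exchanging the roles of $(\mX,\cS)$ and $(\mY,\cT)$ and invoking Tor-independence of $\mI$ and $\mJ'$ instead. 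The main obstacle I foresee is precisely this last collapse: the homology naturally produces $\mI'/\mI'\mJ$, while the lemma insists on $\mI' \cdot R/\mI\mJ$, and aligning them relies essentially on the primed-versus-unprimed Tor-independence hypothesis built into Notation \ref{sec4notation}\eqref{item:TorInd}.
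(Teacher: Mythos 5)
Your argument is correct and takes essentially the same route as the paper: both identify the homology of the shifted tensor complex with $\Tor{*}{\mI'}{R/\mJ}$, kill the higher $\Tor$'s via the dimension shift from $0 \to \mI' \to R \to R/\mI' \to 0$ together with Tor-independence, and match $H_0 \cong \mI'/\mI'\mJ$ with $\mI'\cdot R/\mI\mJ$ by the same sandwich $\mI'\mJ \subseteq \mI'\cap\mI\mJ \subseteq \mI'\cap\mJ = \mI'\mJ$. You make explicit a step the paper leaves as "straightforward to check" (that $\Sigma^{-1}\cS_{\geq 1}$ is itself a free resolution of $\mI'$, so the tensor complex computes the derived tensor product), but the substance is identical.
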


\begin{proof}
We prove the case of $\Sigma^{-1}(\cS_{\geq 1} \otimes_R \mY)$. Since $\mI'$ and $\mJ$ are $\operatorname{Tor}$-independent, we have $\mI'\cap \mJ = \mI' \mJ$. Combining this with the fact that $\mI' \subseteq \mI$, we have \[\mI'\mJ \subseteq \mI' \cap \mI\mJ \subseteq \mI' \cap \mJ = \mI' \mJ\] yielding $\mI'\mJ = \mI' \cap \mI\mJ$. Thus, we have the following chain of isomorphisms: \[\mI' \cdot \frac{R}{\mI \mJ} = \frac{\mI' + \mI \mJ}{\mI \mJ} \cong \frac{\mI'}{\mI' \cap \mI \mJ} = \frac{\mI'}{\mI' \mJ} \cong \mI' \otimes_R \frac{R}{\mJ}.\]

It is straightforward to check that $\Hm{0}{\Sigma^{-1}\left(\cS_{\geq 1} \otimes_R \mY\right)} = \mI' \otimes_R R/\mJ$. For $i \geq 1$, we use the $\operatorname{Tor}$-independence of $\mI'$ and $\mJ$ to get \[\Hm{i}{\Sigma^{-1}\left(\cS_{\geq 1} \otimes_R \mY\right)} = \Tor{i}{\mI'}{\frac{R}{\mJ}} \cong \Tor{i+1}{\frac{R}{\mI'}}{\frac{R}{\mJ}} = 0.\] Thus, $\Sigma^{-1}(\cS_{\geq 1} \otimes_R \mY)$ is the desired resolution. \end{proof}

\begin{note}\label{note:lift}
Since $\mI' \subseteq \mI$, we have a natural surjection $R/\mI' \to R/\mI$ which lifts to a chain map $\phi: \cS \to \mX$. We also lift the surjection $R/\mJ' \to R/\mJ$ to a chain map $\psi: \cT \to \mY$. In the next construction, we lift $\phi$ and $\psi$ to chain maps $\Phi:~\Sigma^{-1}\left(\cS_{\geq 1} \otimes_R \mY\right) \to \amput{\mY}$ and $\Psi: \Sigma^{-1}\left(\mX \otimes_R \cT_{\geq 1}\right) \to \amput{\mY}$.\end{note}

\begin{construct}\label{con:lift}
For $\alpha \otimes \beta \in \Sigma^{-1}\left(\cS_{\geq 1} \otimes_R \mY\right)$, we define
\begin{align*}
\Phi\left(\alpha \otimes \beta\right) &:= \begin{cases} (-1)^{|\alpha| + |\beta|} \phi\left(\alpha\right) \mprod \beta & |\beta| > 0 \\ \partial_1^{\cS}\left(\alpha\right) \mprod \beta & |\beta| = 0, \ |\alpha| = 1 \\ 0 & |\beta| = 0, \ |\alpha| > 1 \end{cases} \\
 &= \ind{|\beta| > 0}(-1)^{|\alpha| + |\beta|} \phi\left(\alpha\right) \mprod \beta + \ind{|\beta| = 0} \ind{|\alpha| = 1} \partial_1^{\cS}\left( \alpha \right) \mprod \beta.
\end{align*}
In the case that $\alpha \otimes \beta \in \Sigma^{-1}\left(\mX \otimes_R \cT_{\geq 1}\right)$, we define
\begin{align*}
\Psi(\alpha \otimes \beta) &:= \begin{cases} (-1)^{|\alpha| + |\beta| - 1} \alpha \mprod \psi(\beta) & |\alpha| > 0 \\ \alpha \mprod \partial_1^{\cT}(\beta) & |\alpha| = 0, |\beta| = 1 \\ 0 & |\alpha| = 0, |\beta| > 1 \end{cases} \\
 &= -\ind{|\alpha| > 0}(-1)^{|\alpha| + |\beta|}\alpha \mprod \psi(\beta) + \ind{|\alpha| = 0}\ind{|\beta| = 1} \alpha \mprod \partial_1^{\cT}(\beta).
\end{align*}
The second equality in both definitions follow as in the proof of Lemma \ref{lem:mdif}.
\end{construct}

\begin{lemma}\label{lem:chmap}
The maps $\Phi$ and $\Psi$, as defined in Construction \ref{con:lift}, are chain maps.
\end{lemma}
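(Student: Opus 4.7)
The plan is to verify directly that $\partial^{\amput{\mY}} \circ \Phi = \Phi \circ \partial^{\Sigma^{-1}(\cS_{\geq 1} \otimes_R \mY)}$ and note that the argument for $\Psi$ is completely symmetric, obtained by swapping the roles of the two tensor factors. Following the indicator-calculus approach of Lemma \ref{lem:mdif}, I would work with the ``second equality" forms of $\Phi$ and $\Psi$ in Construction \ref{con:lift} rather than splitting a priori into cases, so that a single computation covers all degree combinations at once.

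On the right-hand side, the truncation-and-shift differential sends $\alpha \otimes \beta \in \Sigma^{-1}(\cS_{\geq 1} \otimes_R \mY)$ to a signed version of
\[\ind{|\alpha|>1}\,\partial^{\cS}(\alpha) \otimes \beta \;+\; \ind{|\beta|>0}(-1)^{|\alpha|}\, \alpha \otimes \partial^{\mY}(\beta).\]
Applying $\Phi$ termwise, I would use the chain-map identity $\phi \circ \partial^{\cS} = \partial^{\mX} \circ \phi$ on the first summand, and carefully track which branch of Construction \ref{con:lift} governs each term; in particular, applying $\Phi$ to $\alpha \otimes \partial^{\mY}(\beta)$ requires knowing whether $\partial^{\mY}(\beta)$ is in positive degree or lands in $\mY_0$.

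On the left-hand side, I apply Lemma \ref{lem:mdif} to $\Phi(\alpha \otimes \beta)$ summand by summand. The first clause $\ind{|\beta|>0}(-1)^{|\alpha|+|\beta|}\phi(\alpha) \mprod \beta$ expands into a bulk term $\partial^{\mX}(\phi(\alpha)) \mprod \beta$, a Koszul-twist term $(-1)^{|\alpha|}\phi(\alpha) \mprod \partial^{\mY}(\beta)$, and the corner term $\partial^{\mX}(\phi(\alpha)) \mprod \partial^{\mY}(\beta)$ which activates only when $|\alpha| = |\beta| = 1$. The second clause $\ind{|\beta|=0}\ind{|\alpha|=1}\,\partial^{\cS}_1(\alpha) \mprod \beta$ lies in degree $0$ and is annihilated by $\partial^{\amput{\mY}}$.

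The main obstacle is sign and case bookkeeping at the boundaries where the indicator conditions switch branches. In the bulk ($|\alpha|,|\beta|>1$), both sides agree by the standard Leibniz calculation together with $\phi$ being a chain map. The delicate transitions are: when $|\alpha|=1$, the truncation kills $\partial^{\cS}(\alpha)$ on the right, but the corner term of Lemma \ref{lem:mdif} on the left compensates via $\partial^{\mX}(\phi(\alpha)) = \phi(\partial^{\cS}(\alpha))$; when $|\beta|=1$, $\partial^{\mY}(\beta) \in \mY_0$ forces $\Phi$ to switch from its first clause to its second; and when $|\beta|=0,|\alpha|>1$, both sides vanish, one because $\Phi = 0$ and the other because $\partial^{\cS}_1\partial^{\cS}_2 = 0$. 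In each case the indicator identities of Lemma \ref{lem:ind}---especially \eqref{item:mult}, \eqref{item:redun}, \eqref{item:AnotB}, and \eqref{item:eval}---collapse the vanishing or redundant pieces and align the signs, the prefactor $(-1)^{|\alpha|+|\beta|}$ in the definition of $\Phi$ being tuned precisely so that the Koszul sign in Lemma \ref{lem:mdif} and the shift sign from $\Sigma^{-1}$ cancel. The corresponding sign on $\Psi$ is $(-1)^{|\alpha|+|\beta|-1}$ because the ``active" factor there is $\beta$ rather than $\alpha$, shifting the Koszul sign by one; the rest of the verification is verbatim.
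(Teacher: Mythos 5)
Your plan is correct and mirrors the paper's proof: both use the indicator form of $\Phi$ from Construction \ref{con:lift}, expand via Lemma \ref{lem:mdif}, invoke the chain-map identity for $\phi$ together with $\phi_0 = 1$ to handle the $|\alpha|=|\beta|=1$ corner, and note that the degree-zero clause of $\Phi$ is annihilated by $\partial^{\amput{\mY}}$. The only cosmetic differences are that the paper runs $\Phi_{i-1} \circ \partial_i$ through a single chain of equalities into $\partial_i \circ \Phi_i$ rather than computing both sides and comparing, and your $\ind{|\beta|>0}$ factor on the $\alpha \otimes \partial^{\mY}(\beta)$ term is redundant since $\partial_0^{\mY}=0$.
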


\begin{proof}
For the $\Phi$ computations, we use the following formula
\begin{align}\label{align:chain1}
\partial_i^{\Sigma^{-1}\left(\cS_{\geq 1} \otimes_R \mY\right)}\left(\alpha \otimes \beta\right) =& -\partial_{i+1}^{\cS_{\geq 1} \otimes_R \mY}\left(\alpha \otimes \beta\right) \notag \\
 =& -\left(\partial^{\cS_{\geq 1}}(\alpha) \otimes \beta + (-1)^{|\alpha|}\alpha \otimes \Ydif{\beta}\right) \\
 \stackrel{\ref{lem:ind}\eqref{item:eval}}{=}& -\ind{|\alpha| > 1}\partial^{\cS}(\alpha) \otimes \beta - (-1)^{|\alpha|} \alpha \otimes \Ydif{\beta}. \notag
\end{align}
Note that the first two equalities follow by definition.

To see that $\Phi$ is a chain map, we use Lemma \ref{lem:ind}\eqref{item:redun} to get $\ind{|\beta| = 1} = \ind{|\beta| = 1} \ind{ |\beta| > 0}$ and $\ind{|\beta| > 1} = \ind{|\beta| > 1} \ind{ |\beta| > 0}$. Since $\phi$ is a chain map with $\phi_0 = 1_{k[\vv{x}]}$, we have $\partial_1^{\cS}(\alpha) = \Xdif{\phi(\alpha)}$. Lastly, we note that Lemma \ref{lem:ind}\eqref{item:eval} yields
\begin{align}\label{align:van}
\ind{|\beta| = 0}\ind{|\alpha| = 1} \Mdif{ \partial_1^{\cS}(\alpha) \mprod \beta} &=  0.
\end{align}
 Thus, for all $i \in \ZZ$ we have
\begin{align*}
\left(\Phi_{i-1} \circ \partial_i^{\Sigma^{-1}\left(\cS_{\geq 1} \otimes_R \mY\right)}\right)\left(\alpha \otimes \beta \right) \hspace{-3cm} \\
 \stackrel{\ref{align:chain1}}{=}& -\ind{|\alpha| > 1}\Phi_{i-1}\left(\partial^{\cS}(\alpha) \otimes \beta\right) - (-1)^{|\alpha|} \Phi_{i-1}\left(\alpha \otimes \Ydif{\beta}\right) \\
 \stackrel{\ref{con:lift}}{=}& -\ind{|\alpha| > 1} \ind{|\beta| > 0} (-1)^{|\alpha| - 1 + |\beta|} \left( \phi \circ \partial^{\cS} \right)(\alpha) \mprod \beta \\
 & \qquad - \ind{\left| \Ydif{\beta} \right| > 0} (-1)^{2|\alpha| + |\beta| - 1} \phi(\alpha) \mprod \Ydif{\beta}   \\
 & \qquad - \ind{\left|\Ydif{\beta}\right| = 0}\ind{|\alpha| = 1} (-1)^{|\alpha|} \partial^{\cS}(\alpha) \mprod \Ydif{\beta} \\
 \stackrel{*}{=}& \ \ind{|\alpha| > 1} \ind{|\beta| > 0} (-1)^{|\alpha| + |\beta|} \left( \partial^{\mX} \circ \phi \right)(\alpha) \mprod \beta \\
 & \qquad  + \ind{|\beta| > 1} (-1)^{2|\alpha| + |\beta| } \phi(\alpha) \mprod \Ydif{\beta}   \\
 & \qquad + \ind{|\beta| = 1}\ind{|\alpha| = 1} (-1)^{|\alpha| + |\beta|} \left(\partial^{\mX} \circ \phi\right) (\alpha) \mprod \Ydif{\beta} \\
 \stackrel{\ref{lem:ind}\eqref{item:redun}}{=} & \ind{|\alpha| > 1} \ind{|\beta| > 0} (-1)^{|\alpha| + |\beta|} \left( \partial^{\mX} \circ \phi \right)(\alpha) \mprod \beta \\
 & \qquad  + \ind{|\beta| > 1}\ind{|\beta| > 0} (-1)^{2|\alpha| + |\beta| } \phi(\alpha) \mprod \Ydif{\beta}   \\
 & \qquad + \ind{|\beta| = 1}\ind{|\beta| > 0}\ind{|\alpha| = 1} (-1)^{|\alpha| + |\beta|} \left(\partial^{\mX} \circ \phi\right) (\alpha) \mprod \Ydif{\beta} \\
 \stackrel{\ref{lem:mdif}}{=}& \ind{|\beta| > 0} (-1)^{|\alpha| + |\beta|} \Mdif{\phi(\alpha) \mprod \beta} \\
 \stackrel{\ref{align:van}}{=}& \ind{|\beta| > 0} (-1)^{|\alpha| + |\beta|} \Mdif{\phi(\alpha) \mprod \beta} + \ind{|\beta| = 0}\ind{|\alpha| = 1} \Mdif{\partial_1^{\cS}(\alpha) \mprod \beta} \\
 =& \ \Mdif{\ind{|\beta| > 0} (-1)^{|\alpha| + |\beta|} \phi(\alpha) \mprod \beta + \ind{|\beta| = 0}\ind{|\alpha| = 1} \partial_1^{\cS}(\alpha) \mprod \beta} \\
 \stackrel{\ref{con:lift}}{=}& \left(\partial_i^{\amput{\mY}} \circ \Phi_i \right) \left(\alpha \otimes \beta\right).
\end{align*}
The unmarked equality follows from the linearity of $\partial^{\amput{\mY}}$ while starred equality follows from the fact that $\phi$ is a chain map and that $|\Ydif{b}| = |b| - 1$. The proof that $\Psi$ is a chain map is similar.
\end{proof}

The following result is a special case of Theorem \ref{thm:main}.
\begin{theorem}\label{thm:cones}
The complex $\Cone{\Phi}$ resolves $R/ \langle \mI', \mI\mJ \rangle$ over $R$, and $\Cone{\Psi}$ resolves $R / \langle \mI\mJ, \mJ'\rangle$ over $R$. 
\end{theorem}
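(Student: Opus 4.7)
The plan is to derive the conclusion from the standard long exact sequence associated to a mapping cone. By Lemma \ref{lem:chmap}, $\Phi$ is a chain map, so we obtain a short exact sequence $0 \to \amput{\mY} \to \Cone(\Phi) \to \Sigma\bigl(\Sigma^{-1}(\cS_{\geq 1} \otimes_R \mY)\bigr) \to 0$, and consequently a long exact sequence relating $H_n(\Cone(\Phi))$ to $H_n(\amput{\mY})$ and $H_{n-1}(\Sigma^{-1}(\cS_{\geq 1} \otimes_R \mY))$ via the induced map $H_n(\Phi)$. Theorem \ref{thm:resn} says that $\amput{\mY}$ resolves $R/\mI\mJ$, while Lemma \ref{lem:ideal} says that $\Sigma^{-1}(\cS_{\geq 1}\otimes_R\mY)$ resolves $\mI' \cdot R/\mI\mJ$; both complexes are therefore acyclic in positive degrees.

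Combining these vanishings with the long exact sequence gives $H_n(\Cone(\Phi)) = 0$ for every $n \geq 2$. The remaining portion of the sequence reads
\[
0 \longrightarrow H_1(\Cone(\Phi)) \longrightarrow \mI'\cdot R/\mI\mJ \xrightarrow{H_0(\Phi)} R/\mI\mJ \longrightarrow H_0(\Cone(\Phi)) \longrightarrow 0,
\]
so everything reduces to showing that $H_0(\Phi)$ is, up to canonical identifications, the natural inclusion $\mI' \cdot R/\mI\mJ \hookrightarrow R/\mI\mJ$. To see this, I would trace through Construction \ref{con:lift} and note that the only contributions to $\Phi$ in degree $0$ come from pairs $\alpha \otimes \beta$ with $|\alpha| = 1$ and $|\beta| = 0$, where $\Phi(\alpha \otimes \beta) = \partial_1^{\cS}(\alpha)\cdot \beta \in R = \amput{\mY}_0$. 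Meanwhile, the isomorphism $H_0(\Sigma^{-1}(\cS_{\geq 1}\otimes_R\mY)) \cong \mI'/(\mI'\mJ)$ extracted from the proof of Lemma \ref{lem:ideal} is also induced by $\partial_1^{\cS}$, so $H_0(\Phi)$ becomes the inclusion $\mI'/(\mI'\mJ) \cong (\mI' + \mI\mJ)/\mI\mJ \hookrightarrow R/\mI\mJ$. Injectivity forces $H_1(\Cone(\Phi)) = 0$, and the cokernel identifies $H_0(\Cone(\Phi))$ with $R/(\mI'+\mI\mJ) = R/\langle \mI',\mI\mJ\rangle$.

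The argument for $\Cone(\Psi)$ is completely symmetric, obtained by interchanging the roles of $(\mX,\cS,\mI,\mI')$ and $(\mY,\cT,\mJ,\mJ')$ and invoking the second half of Lemma \ref{lem:ideal}. The main obstacle is the careful bookkeeping required to confirm that $H_0(\Phi)$ is indeed the natural inclusion: this involves simultaneously tracking the case analysis in the definition of $\Phi$, the identity $\mI' \cap \mI\mJ = \mI'\mJ$ (from Tor-independence) that governs the computation of $H_0(\Sigma^{-1}(\cS_{\geq 1}\otimes_R \mY))$ in Lemma \ref{lem:ideal}, and the compatibility of the two descriptions so that the induced map on $H_0$ is genuinely the inclusion rather than some twisted variant.
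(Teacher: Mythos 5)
Your argument is correct and follows essentially the same route as the paper: both use the mapping-cone short exact sequence, the acyclicity of $\amput{\mY}$ and $\Sigma^{-1}(\cS_{\geq 1}\otimes_R\mY)$ from Theorem \ref{thm:resn} and Lemma \ref{lem:ideal}, and the resulting reduction of the long exact sequence to a four-term piece in low degrees. The paper packages the final step as a commutative diagram and the Short-Five Lemma; you instead unwind $H_0(\Phi)$ by hand and identify it with the natural inclusion $\mI'\cdot R/\mI\mJ\hookrightarrow R/\mI\mJ$, which is exactly the commutativity check the paper leaves implicit, so the two arguments are the same in substance.
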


\begin{proof}
Since $\Sigma\left(\Sigma^{-1} \left(\cS_{\geq 1} \otimes_R \mY\right)\right) = \cS_{\geq 1}\otimes_R \mY$, the short exact sequence of complexes associated with the mapping cone is given by
\[
\xymatrix{
0 \ar[r] & \amput{\mY} \ar[r] & \Cone{\Phi} \ar[r] & \cS_{\geq 1}  \otimes_R \mY \ar[r] & 0.
}
\]
Since $\amput{\mY}$ only has homology in degree 0 and $\cS_{\geq 1} \otimes_R \mY$ only has homology in degree 1, the corresponding long exact sequence in homology reduces to the top row in the following commutative diagram. Moreover, the Short-Five Lemma tells us that the diagram is an isomorphism of short exact sequences.
\[
\xymatrix{
 0 \ar[r] & \Hm{1}{\cS_{\geq 1} \otimes_R \mY} \ar[r] \ar[d]^{\cong} & \Hm{0}{\amput{\mY}} \ar[r] \ar[d]^{\cong} & \Hm{0}{\Cone{\Phi}} \ar[r] \ar@{-->}[d]^{\cong} & 0 \\
 0 \ar[r] & \mI' \cdot \frac{R}{\mI\mJ} \ar[r] & \frac{R}{\mI\mJ} \ar[r] & \frac{R}{\langle \mI', \mI\mJ \rangle} \ar[r] & 0
}
\]

The argument for $\Cone{\Psi}$ follows nearly identical arguments. \end{proof}

In the next result we use a short exact sequence of complexes to intertwine the mapping cones from Theorem \ref{thm:cones}.

\begin{theorem}\label{thm:genres}
Let $\Phi$ and $\Psi$ be as in construction \ref{con:lift}, set \[\Omega = \begin{pmatrix} \Phi & \Psi \end{pmatrix} : \Sigma^{-1}\left(\cS_{\geq 1} \otimes_R \mY\right) \oplus \Sigma^{-1}\left(\mX \otimes_R \cT_{\geq 1}\right) \longrightarrow \amput{\mY}.\] Then $\Cone{\Omega}$ is a free resolution of $R /\langle \mI', \mI\mJ, \mJ' \rangle$ over $R$.
\end{theorem}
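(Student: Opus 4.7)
The plan is to imitate the mapping cone argument of Theorem \ref{thm:cones}, but now with $\Omega = (\Phi\ \Psi)$ in place of $\Phi$ or $\Psi$ alone. Writing $A := \Sigma^{-1}(\cS_{\geq 1}\otimes_R\mY)\oplus \Sigma^{-1}(\mX\otimes_R\cT_{\geq 1})$, the cone fits into the short exact sequence of complexes
\[
0 \longrightarrow \amput{\mY} \longrightarrow \Cone{\Omega} \longrightarrow \Sigma A \longrightarrow 0,
\]
and the $\Sigma$ on the right cancels the $\Sigma^{-1}$'s in $A$, giving $\Sigma A = (\cS_{\geq 1}\otimes_R\mY)\oplus(\mX\otimes_R\cT_{\geq 1})$. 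By Theorem \ref{thm:resn}, $\amput{\mY}$ has homology concentrated in degree $0$ with value $R/\mI\mJ$; by Lemma \ref{lem:ideal}, $\Sigma A$ has homology concentrated in degree $1$ with value $(\mI'\cdot R/\mI\mJ)\oplus(\mJ'\cdot R/\mI\mJ)$. Feeding these data into the long exact sequence in homology kills $H_n(\Cone{\Omega})$ for $n\geq 2$ and collapses the rest to the four-term exact sequence
\[
0 \to H_1(\Cone{\Omega}) \to (\mI'\cdot R/\mI\mJ)\oplus(\mJ'\cdot R/\mI\mJ) \xrightarrow{\Omega_*} R/\mI\mJ \to H_0(\Cone{\Omega}) \to 0.
\]

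Next, I identify $\Omega_*$. The proof of Theorem \ref{thm:cones} already shows that $\Phi_*$ and $\Psi_*$ are, up to sign, the natural inclusions $\mI'\cdot R/\mI\mJ \hookrightarrow R/\mI\mJ$ and $\mJ'\cdot R/\mI\mJ \hookrightarrow R/\mI\mJ$, so $\Omega_*$ sends a pair $(\bar a,\bar b)$ to $\bar a \pm \bar b$. Its cokernel is therefore $R/\mI\mJ$ modulo the sum $(\mI'\cdot R/\mI\mJ) + (\mJ'\cdot R/\mI\mJ) = (\mI' + \mI\mJ + \mJ')/\mI\mJ$, which is exactly $R/\langle \mI', \mI\mJ, \mJ' \rangle$. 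All that remains is to verify $H_1(\Cone{\Omega}) = 0$, i.e., the injectivity of $\Omega_*$.

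The main obstacle is this last injectivity. An element of $\ker \Omega_*$ corresponds to a class that lies in both $(\mI' + \mI\mJ)/\mI\mJ$ and $(\mJ' + \mI\mJ)/\mI\mJ$ inside $R/\mI\mJ$, so it suffices to establish the containment
\[
(\mI'+\mI\mJ)\cap(\mJ'+\mI\mJ)\subseteq \mI\mJ.
\]
Using $\mI' \subseteq \mI$ and $\mJ' \subseteq \mJ$ from Notation \ref{sec4notation}, the left- and right-hand summands of this intersection lie in $\mI$ and $\mJ$, respectively, so the intersection is contained in $\mI \cap \mJ$. The Tor-independence of $\{\mI,\mJ\}$ from Notation \ref{sec4notation}\eqref{item:TorInd} then forces $\mI \cap \mJ = \mI\mJ$, closing the containment. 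Combined with the cokernel computation, this shows $\Cone{\Omega}$ is acyclic in positive degrees and has $H_0 = R/\langle\mI',\mI\mJ,\mJ'\rangle$; since the cone is built from free complexes, it is the desired free resolution.
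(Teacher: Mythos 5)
Your argument is correct, and it reaches the same endpoint as the paper by a genuinely different route. The paper constructs the auxiliary short exact sequence
\[
0 \longrightarrow \amput{\mY} \longrightarrow \Cone{\Phi}\oplus\Cone{\Psi} \longrightarrow \Cone{\Omega} \longrightarrow 0
\]
and feeds the homology of the middle term, already computed in Theorem \ref{thm:cones}, into the long exact sequence; the identification of $H_1$ then appears as a double kernel $\ker\left[H_0\left(\amput{\mY}\right)\to H_0\left(\Cone{\Phi}\right)\right]\cap\ker\left[H_0\left(\amput{\mY}\right)\to H_0\left(\Cone{\Psi}\right)\right]$. You instead use the canonical mapping-cone short exact sequence $0\to\amput{\mY}\to\Cone{\Omega}\to\Sigma A\to 0$ together with Lemma \ref{lem:ideal} to compute $H_*\left(\Sigma A\right)$ directly, and you identify the connecting homomorphism as $\Omega_*$. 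Both proofs bottleneck on the same algebraic fact, namely that $(\mI'+\mI\mJ)\cap(\mJ'+\mI\mJ)\subseteq\mI\cap\mJ=\mI\mJ$, which you establish cleanly. Your version is arguably more transparent because it uses the standard cone triangle and makes the role of $\Omega_*$ (and hence of $\Phi_*,\Psi_*$) explicit as a connecting map; the paper's version has the minor advantage of reusing Theorem \ref{thm:cones} wholesale, avoiding any discussion of the connecting homomorphism. One small point worth making explicit if you wrote this up: the claim that the connecting map for a cone's short exact sequence is $\pm\Omega_*$ is the standard identification, and your appeal to the proof of Theorem \ref{thm:cones} to see that $\Phi_*$ and $\Psi_*$ are the expected inclusions implicitly uses that same identification for the two component cones; stating this once cleanly would tighten the exposition.
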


\begin{proof}
By definition of $\Omega$, it is straightforward to show that the following sequence is exact:
\[
\xymatrix{
0 \ar[r] & \amput{\mY} \ar[r]^-{\left(\begin{smallmatrix} 1 \\ 0 \\ -1 \\ 0 \end{smallmatrix}\right)} & \underset{\Cone{\Psi}}{\overset{\Cone{\Phi}}{\bigoplus}} \ar[rr]^-{\left(\begin{smallmatrix} 1 & 0 & 1 & 0 \\ 0 & 1 & 0 & 0 \\ 0 & 0 & 0 & 1 \end{smallmatrix}\right)} && \Cone{\Omega} \ar[r] & 0.
}
\]
We use the associated long exact sequence in homology to see that $\Hm{i}{\Cone{\Omega}} = 0$ for $i > 1$, leaving the case $i = 1$.
\begin{align*}
\Hm{1}{\Cone{\Omega}} \hspace{-1cm} \\
 &\cong \ker\left[\Hm{0}{\amput{\mY}} \to \Hm{0}{\Cone{\Phi}}\right] \cap \ker\left[\Hm{0}{\amput{\mY}} \to \Hm{0}{\Cone{\Psi}}\right] \\
 &\cong \ker\left[\frac{R}{\mI \mJ} \to \frac{R}{\langle \mI', \mI\mJ \rangle}\right] \cap \ker\left[\frac{R}{\mI \mJ} \to \frac{R}{\langle \mI \mJ, \mJ' \rangle}\right] \\
 &= \left(\mI' \cdot \frac{R}{\mI \mJ} \right) \cap \left(\mJ' \cdot \frac{R}{\mI \mJ}\right)
\end{align*} This intersection is 0 because the assumptions $\mI' \subseteq \mI$ and $\mJ' \subseteq \mJ$, along with Tor-independence, imply that \[(\mI' + \mI\mJ) \cap (\mI\mJ + \mJ') \subseteq \mI \cap \mJ = \mI\mJ\] so we get $\Hm{1}{\Cone{\Omega}} = 0$.

The long exact sequence in homology now reduces to the top row of the following commutative diagram.
\[
\xymatrix{
0 \ar[r] & \Hm{0}{\amput{\mY}} \ar[r]^{\overline{\left(\begin{smallmatrix} 1 \\ 0 \\ -1 \\ 0 \end{smallmatrix}\right)}} \ar[d]^{\cong} & \underset{\Hm{0}{\Cone{\Psi}}}{\overset{\Hm{0}{\Cone{\Phi}}}{\bigoplus}} \ar[rr]^-{\overline{\left(\begin{smallmatrix} 1 & 0 & 1 & 0 \\ 0 & 1 & 0 & 0 \\ 0 & 0 & 0 & 1 \end{smallmatrix}\right)}} \ar[d]^{\cong} && \Hm{0}{\Cone{\Omega}} \ar[r] \ar@{-->}[d]^{\cong} & 0 \\
0 \ar[r] & \frac{R}{\mI \mJ} \ar[r]_-{\left(\begin{smallmatrix} 1 \\ -1 \end{smallmatrix}\right)} & \underset{R/\langle \mI \mJ, \mJ' \rangle }{\overset{R /\langle \mI', \mI \mJ \rangle}{\bigoplus}} \ar[rr]_-{\left(\begin{smallmatrix} 1 & 1 \end{smallmatrix}\right)} && \frac{R}{\langle \mI', \mI \mJ,\mJ' \rangle} \ar[r] & 0
}
\] Thus, $\Cone{\Omega}$ resolves the desired quotient. \end{proof}




\section{Minimality Conditions}

To make full use of the mapping cone in Theorem \ref{thm:genres} we want to ensure it is minimal. We saw in Theorem \ref{thm:resn} that if the resolutions $\mX$ and $\mY$ are minimal, then $\amput{\mY}$ is minimal. In this section, we give conditions guaranteeing $\Cone{\Omega}$ from Theorem \ref{thm:genres} is minimal.

\begin{lemma}\label{lem:kosim}
Using the notation of \ref{sec4notation}, let $\mI$ be generated by a regular sequence with minimal resolution $\mX$ with underlying graded module $\mX^{\natural}$. If $M \subseteq \mX^{\natural}$ is a submodule such that $\Xdif{M} \subseteq \left(\mI\right)^2 \mX$, then $M \subseteq \mI \mX$.
\end{lemma}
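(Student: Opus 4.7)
The plan is to exploit the fact that, since $\mI$ is generated by a regular sequence $\vv{f} = f_1, \ldots, f_r$, the minimal free resolution $\mX$ is the Koszul complex $K(\vv{f}; R)$. Thus $\mX^{\natural} = \bigoplus_{i=0}^{r} \bigwedge^i R^r$ has an $R$-basis of wedge products $e_I = e_{i_1} \wedge \cdots \wedge e_{i_k}$ for subsets $I = \{i_1 < \cdots < i_k\} \subseteq \{1,\ldots,r\}$, and the differential acts by $\Xdif{e_I} = \sum_{j=1}^{k}(-1)^{j-1} f_{i_j}\, e_{I \setminus \{i_j\}}$. In particular $\Xdif{\mX} \subseteq \mI \mX$ automatically, so the hypothesis $\Xdif{M} \subseteq \mI^2 \mX$ is the statement that the differential raises the $\mI$-adic filtration on $\mX$ one extra step on elements of $M$.

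The key structural input is that, because $\vv{f}$ is regular, the images $\bar{f}_1,\ldots,\bar{f}_r$ form a free $R/\mI$-basis of $\mI/\mI^2$. I would reduce everything modulo $\mI^2$ and read off each coefficient of $m$ from this basis. Concretely, given $m \in M$, write $m = \sum_I a_I e_I$ uniquely with $a_I \in R$, and compute that the coefficient of $e_J$ in $\Xdif{m}$ equals
\[
\sum_{i \notin J} (-1)^{\sigma(J,i)}\, f_i\, a_{J \cup \{i\}}
\]
for appropriate signs $\sigma(J,i)$. The hypothesis forces each such coefficient to lie in $\mI^2$, so passing to $\mI/\mI^2$ produces the relation
\[
\sum_{i \notin J} (-1)^{\sigma(J,i)}\, \overline{a_{J \cup \{i\}}}\, \bar{f}_i \,=\, 0
\]
in $\mI/\mI^2$, where $\overline{a_{J \cup \{i\}}} \in R/\mI$. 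Since $\{\bar{f}_i\}_{i \notin J}$ is part of a free $R/\mI$-basis, each $\overline{a_{J \cup \{i\}}} = 0$, i.e., $a_{J \cup \{i\}} \in \mI$. Letting $J$ range over all subsets and $i \notin J$ shows $a_I \in \mI$ for every nonempty $I$, which is exactly the statement that the positive-degree part of $m$ lies in $\mI \mX$.

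The one delicate point — and really the only conceptual obstacle — is the degree-$0$ contribution: the differential vanishes on $\mX_0$, so a constant term $a_{\emptyset}$ is not constrained by $\Xdif{m} \in \mI^2 \mX$, and the claim as written would fail on that summand unless one takes $M \subseteq \bigoplus_{i \geq 1} \mX_i$, which is the only relevant case for the minimality arguments that follow in this section. Once this convention is in force, the argument reduces to the bookkeeping computation above, and no additional ingredient beyond freeness of $\mI/\mI^2$ over $R/\mI$ is needed. The rest of the work is just carefully tracking Koszul signs, which is routine.
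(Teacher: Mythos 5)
Your proof is correct, and it takes a genuinely different route from the paper's. The paper proves this lemma in one line by citing \cite[Lemma 2.8(ii)]{MR3754407}, whereas you give a self-contained argument: expand $m$ in the standard Koszul basis, read off the $e_J$-coefficient of $\Xdif{m}$ as $\sum_{i\notin J}\pm f_i\,a_{J\cup\{i\}}$, reduce modulo $\mI^2$, and invoke the fact that $\bar f_1,\dots,\bar f_r$ form a free $R/\mI$-basis of $\mI/\mI^2$ (which is exactly where regularity of the sequence enters). Ranging $J$ over all subsets and $i\notin J$ then gives $a_I\in\mI$ for every nonempty $I$. This is the same essential mechanism that underlies the cited result, but making it explicit is more illuminating, and it costs only a routine Koszul-sign computation.

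You also correctly flag a real subtlety that the paper's statement glosses over: a degree-zero element $a_\emptyset e_\emptyset$ has $\Xdif{a_\emptyset e_\emptyset}=0\in\mI^2\mX$ regardless of $a_\emptyset$, so the lemma as literally stated fails on $\mX_0$. The implicit convention must be that $M$ sits in positive degrees, which is indeed the only way the lemma is used in Proposition \ref{prop:regseq} (where $M=\IM\phi_j$ for $j\geq 1$). Noting this explicitly, as you do, is a genuine improvement in precision over the paper's phrasing.
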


\begin{proof}
Since $\mI$ is generated by a regular sequence, we know that $\mX$ is a Koszul complex. For any $m \in M$, we have $\Xdif{m} \in \left(\mI\right)^2 \mX$. By \cite[Lemma 2.8(ii)]{MR3754407}, we have $m \in \mI \mX$ and thus $M \subseteq \mI \mX$.\end{proof}

\begin{prop}\label{prop:regseq}
With the notation of \ref{sec4notation}, suppose the ideal $\mI \subset R$ is generated by a regular sequence and that $\mI' \subseteq \mI^2$. If $\mX$ and $\cS$ are minimal, then $\phi: \cS \to \mX$ from Note \ref{note:lift} can be chosen such that $\phi_0 = 1$ and $\left(\IM \phi\right)_{\geq 1} \subseteq \mI \mX \subseteq \maxM_R \mX$.
\end{prop}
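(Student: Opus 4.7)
The plan is to build $\phi\colon \cS\to \mX$ by induction on degree, using Lemma \ref{lem:kosim} as the mechanism that converts any $\mX$-valued lift into one landing in $\mI\mX$.

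Set $\phi_0 = 1_R$, which is a valid lift of the surjection $R/\mI' \twoheadrightarrow R/\mI$ because $\mI'\subseteq \mI$. For $n = 1$, pick any $R$-linear lift $\phi_1\colon \cS_1 \to \mX_1$ of the chain-map relation $\partial_1^{\mX}\phi_1 = \partial_1^{\cS}$. Since $\partial_1^{\mX}(\phi_1(\cS_1)) = \partial_1^{\cS}(\cS_1) = \mI' \subseteq \mI^2 = \mI^2 \mX_0$, Lemma \ref{lem:kosim} applied to $M = \phi_1(\cS_1)$ yields $\phi_1(\cS_1)\subseteq \mI\mX_1$ at once.

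For higher degrees the naive induction does not close directly, because assuming $\phi_n(\cS_n)\subseteq \mI\mX_n$ only gives $\phi_n(\partial_{n+1}^{\cS}(\cS_{n+1}))\subseteq \mI\mX_n$, while Lemma \ref{lem:kosim} requires the stronger condition $\phi_n(\partial_{n+1}^{\cS}(\cS_{n+1}))\subseteq \mI^2\mX_n$. The cleanest fix is to exploit the Koszul DG-algebra structure on $\mX$: replace $\cS$ by a Tate DG-algebra resolution $\cS^T$ of $R/\mI'$, obtained by adjoining exterior and divided-power generators to the Koszul complex on a minimal generating set $g_1,\ldots,g_\beta$ of $\mI'\subseteq\mI^2$. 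Define a DG-algebra morphism $\phi^T\colon \cS^T \to \mX$: send each degree-$1$ Tate generator $T_i$ to an element $y_i\in \mI\mX_1$ with $\partial y_i = g_i$ (such $y_i$ exists because each $g_i\in \mI^2$ can be expressed as $\partial$ of an element of $\mI\mX_1$, using that $\mX$ is the Koszul complex on a regular sequence generating $\mI$), and extend multiplicatively. At each subsequent Tate generator $U$ with $\partial U = \zeta$ built from previously-defined generators, $\phi^T(\zeta)$ is a sum of products of at least two factors each in $\mI\mX$, so $\phi^T(\zeta)\in \mI^2\mX$; Lemma \ref{lem:kosim} then provides a lift $\phi^T(U)\in \mI\mX$. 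Iterating secures $\phi^T(\cS^T_n)\subseteq \mI\mX_n$ for all $n\ge 1$.

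Since $\cS$ is the minimal free resolution of $R/\mI'$ and $\cS^T$ is a (possibly non-minimal) free resolution of the same ring, standard theory yields a split chain-map inclusion $\cS \hookrightarrow \cS^T$; composing with $\phi^T$ produces $\phi\colon \cS \to \mX$ with $\phi_0 = 1_R$ and $(\IM\phi)_{\geq 1}\subseteq \mI\mX\subseteq\maxM_R\mX$, the final inclusion being immediate from $\mI\subseteq\maxM_R$. The main obstacle is arranging the Tate step so that cycles involving linear combinations of single higher-degree generators (whose $\phi^T$-images only lie in $\mI\mX$) still have $\phi^T$-image in $\mI^2\mX$; this is handled by exploiting the remaining freedom in each $\phi^T(U)$ modulo $\IM\partial_{n+1}^{\mX}$, using Lemma \ref{lem:kosim} iteratively together with the fact that modifying $\phi^T(U)$ by an element of $\IM\partial_{n+1}^{\mX}$ preserves both the chain-map relation and the containment $\phi^T(U)\in\mI\mX$.
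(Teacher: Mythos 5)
Your proof has a genuine gap, and it stems from missing the key algebraic fact the paper relies on. The paper first reduces to the case $\mI' = \mI^2$ by factoring the surjection $R/\mI' \to R/\mI$ through $R/\mI^2$, and then invokes \cite[Corollary A2.13, Exercise A2.17(d)]{MR1322960}: when $\mI$ is generated by a regular sequence, the minimal free resolution $\cS$ of $R/\mI^2$ (and likewise $\mX$ of $R/\mI$) satisfies $\IM \partial_i^{\cS} \subseteq \mI\cS$. This is the fact that makes the ``naive induction'' close after all. If $\IM\phi_{j-1} \subseteq \mI\mX_{j-1}$, then $\Xdif{\IM\phi_j} = \phi_{j-1}(\partial^{\cS}(\cS_j)) \subseteq \phi_{j-1}(\mI\cS_{j-1}) \subseteq \mI\cdot\phi_{j-1}(\cS_{j-1}) \subseteq \mI^2\mX_{j-1}$, and Lemma~\ref{lem:kosim} applies. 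So the direct induction you dismissed is exactly the paper's argument; you were missing the inclusion $\IM\partial^{\cS} \subseteq \mI\cS$ (which is \emph{not} just the minimality condition $\IM\partial^{\cS} \subseteq \maxM_R\cS$) and the preliminary reduction to $\mI' = \mI^2$ that justifies it.

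Your Tate/DG-algebra substitute does not repair this, and you partially admit so. You build $\cS^T$ on a minimal generating set of $\mI'$ itself (not of $\mI^2$), and then claim that for each higher Tate generator $U$ the cycle $\zeta = \partial U$ has $\phi^T(\zeta) \in \mI^2\mX$ because it is ``a sum of products of at least two factors.'' That is false: the degree-two Tate generators kill $H_1$ of the Koszul complex on $g_1,\ldots,g_\beta$, and a $1$-cycle is a linear combination $\sum a_i T_i$ of \emph{single} degree-one generators. Then $\phi^T(\zeta) = \sum a_i\,\phi^T(T_i) \in \mI\mX_1$, and nothing forces it into $\mI^2\mX_1$ unless the syzygy coefficients $a_i$ lie in $\mI$ --- precisely the Eisenbud fact you did not use, and which is only available after reducing to $\mI' = \mI^2$. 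The closing appeal to ``remaining freedom modulo $\IM\partial^{\mX}$'' is not an argument: modifying $\phi^T(U)$ by a boundary changes $\phi^T(U)$, not $\phi^T(\zeta)$, so it cannot put $\phi^T(\zeta)$ into $\mI^2\mX$; and Lemma~\ref{lem:kosim} needs $\phi^T(\zeta) \in \mI^2\mX$ as a hypothesis, not a conclusion. The fix is to discard the Tate machinery, first replace $\mI'$ by $\mI^2$, then cite the Eisenbud result so that the entries of $\partial^{\cS}$ lie in $\mI$, and run the simple induction.
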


\begin{proof}

Since $\mI' \subseteq \mI^2$, the natural surjection $R/\mI' \to R/\mI$ can be factored through $R/ \mI^2$, so it suffices to check that $R/ \mI^2 \to R/\mI$ lifts to a chain map $\phi$ with the claimed property. Thus, we assume without loss of generality that $\mI' = \mI^2$. Since $\cS$ and $\mX$ resolve powers of $\mI$, \cite[Corollary~A2.13, Exercise A2.17(d)]{MR1322960} tells us that $\IM \partial_i^{\cS} \subseteq \mI \cS$ and $\IM \partial_i^{\mX} \subseteq \mI \mX$ for all $i$.

Consider $\IM \phi_1 \subseteq \mX$ and note that $\partial_1^{\cS}\left(\cS_1\right) = \mI^2$. It follows that \[\Xdif{\IM \phi_1} = \Xdif{\phi_1\left(\cS_1\right)} = \phi_0\left(\partial^{\cS}\left(\cS_1\right)\right) = \phi_0\left(\mI^2\right) = \mI^2. \] It follows from Lemma \ref{lem:kosim} that $\IM \phi_1 \subseteq \mI \mX_1$. Proceeding with induction, we observe that if $\IM \phi_{j-1} \subseteq \mI \mX_{j-1}$, then \[\Xdif{\IM \phi_j} = \Xdif{\phi_j\left(\cS_j\right)} = \phi_{j-1}\left(\partial^{\cS}\left(\cS_j\right)\right) \subseteq \mI \phi_{j-1}\left(\cS_{j-1}\right) \subseteq \left(\mI\right)^2 \mX_{j-1}.\] Applying Lemma \ref{lem:kosim} again, we obtain $\IM \phi_j \subseteq \mI \mX_j$. Thus by induction, we observe that $\left(\IM \phi\right)_{\geq 1} \subseteq \mI \mX$. \end{proof}

We now use this proposition to obtain the following theorem.

\begin{theorem}\label{thm:minimal}
Suppose $\mI' \subseteq \mI^2$ and $\mJ' \subseteq \mJ^2$ with both $\mI$ and $\mJ$ generated by regular sequences. If the resolutions $\mX$, $\mY$, $\cS$, and $\cT$ are all minimal and we choose $\phi$ and $\psi$ as in Proposition \ref{prop:regseq}, then $\Cone{\Omega}$ from Theorem \ref{thm:genres} is minimal.
\end{theorem}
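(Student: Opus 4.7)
The plan is to verify that every entry of the differential matrix of $\Cone{\Omega}$ lies in $\maxM_R$. In block form, this differential consists of three ``diagonal'' blocks --- namely $\partial^{\amput{\mY}}$ together with (signed, shifted) copies of the differentials of $\cS_{\geq 1} \otimes_R \mY$ and $\mX \otimes_R \cT_{\geq 1}$ --- and two ``off-diagonal'' chain maps $\Phi$ and $\Psi$ coming from the cone construction. Minimality of the whole cone thus reduces to minimality of each piece separately, which I will check in turn.

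The diagonal pieces are essentially immediate. Since $\mX$ and $\mY$ are minimal, $\amput{\mY}$ is minimal by the ``moreover'' clause of Theorem~\ref{thm:resn}. For the other two, I use the fact that the tensor product of two minimal complexes of free modules over $(R,\maxM_R)$ is minimal --- every matrix entry of $\partial^{\cS} \otimes 1 \pm 1 \otimes \partial^{\mY}$ is just a single entry of $\partial^{\cS}$ or $\partial^{\mY}$, hence in $\maxM_R$ --- and both hard truncation (which only zeroes out some differentials) and shifting preserve the property of having matrix entries in $\maxM_R$.

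The heart of the argument is the off-diagonal blocks. I invoke Proposition~\ref{prop:regseq}, applied to the pairs $(\mI,\mI')$ and $(\mJ,\mJ')$, to choose lifts with $(\IM \phi)_{\geq 1} \subseteq \mI \mX$ and $(\IM \psi)_{\geq 1} \subseteq \mJ \mY$. Evaluating the formula of Construction~\ref{con:lift} on a basis tensor $\alpha \otimes \beta \in \Sigma^{-1}(\cS_{\geq 1} \otimes_R \mY)$ (so that $|\alpha| \geq 1$), two cases arise. If $|\beta| \geq 1$, then $\Phi(\alpha \otimes \beta) = \pm\, \phi(\alpha) \mprod \beta$ with $\phi(\alpha) \in \mI \mX \subseteq \maxM_R \mX$, so expanding $\phi(\alpha)$ in the basis of $\mX$ shows that every coefficient of $\Phi(\alpha \otimes \beta)$ relative to the basis of $\amput{\mY}$ lies in $\maxM_R$. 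In the remaining edge case $|\alpha|=1$, $|\beta|=0$, we have $\Phi(\alpha \otimes \beta) = \partial_1^{\cS}(\alpha) \mprod \beta$ and $\partial_1^{\cS}(\alpha) \in \mI' \subseteq \mI^2 \subseteq \maxM_R$. The analysis of $\Psi$ is the mirror image, using $\psi$ and $\mJ' \subseteq \mJ^2$.

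The genuine work here has already been done by Proposition~\ref{prop:regseq}; that is precisely where the hypotheses $\mI' \subseteq \mI^2$, $\mJ' \subseteq \mJ^2$, and the regular-sequence assumptions earn their keep. Without the squared containments, the first case above would only give $\phi(\alpha) \in \mX$ --- not in $\maxM_R \mX$ --- and the off-diagonal block could pick up unit entries, destroying minimality. Once Proposition~\ref{prop:regseq} is in hand, the rest is bookkeeping against Construction~\ref{con:lift} and the block structure of the mapping cone.
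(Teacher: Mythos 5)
Your proof is correct and follows essentially the same approach as the paper: write out the $3\times 3$ block differential of $\Cone{\Omega}$, handle the diagonal blocks by minimality of the ingredient resolutions and stability of minimality under truncation, shift, and tensor product, and handle the off-diagonal blocks $\Phi$, $\Psi$ via Proposition~\ref{prop:regseq} and the case analysis of Construction~\ref{con:lift}. The only (cosmetic) difference is that you justify the edge case $|\alpha|=1$, $|\beta|=0$ directly by $\partial_1^{\cS}(\alpha)\in\mI'\subseteq\maxM_R$, which is a touch cleaner than the paper's appeal to $\partial^{\cS}(\alpha)\in\mI\cS$.
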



\begin{proof}
To prove minimality we just need to show that $\IM \partial^{\Cone{\Omega}} \subseteq \maxM_R \Cone{\Omega}$. To see this, we first note that \[\partial_i^{\Cone{\Omega}} = \begin{pmatrix} \partial_i^{\amput{\mY}} & \Phi_{i-1} & \Psi_{i-1} \\ 0 & \partial_i^{\cS_{\geq 1} \otimes_R \mY} & 0 \\ 0 & 0 & \partial_i^{\mX \otimes_R \cT_{\geq 1}} \end{pmatrix}.\] By the minimality of our four resolutions, we know the entries on the main diagonal all lie in $\maxM_R$. Thus, we need only address $\Phi_{i-1}$ and $\Psi_{i-1}$. We are argue the case of $\Phi_{i-1}$.

For any $\alpha \otimes \beta$ in the domain of $\Phi$, we have $\alpha \in \cS_{\geq 1}$. It follows that $\phi(\alpha) \in \mI \mX$ and thus $\phi(\alpha) \mprod \beta \in \mI \left(\amput{\mY}\right) \subseteq \maxM_R \left(\amput{\mY}\right)$. Moreover, we know $\partial^{\cS}(\alpha) \in \mI \cS$ and thus $\partial^{\cS}(\alpha) \mprod \beta \in \mI \left(\amput{\mY}\right) \subseteq \maxM_R \left(\amput{\mY}\right)$. Combining these observation with the definition of $\Phi$ in Construction \ref{con:lift} we have $\Phi(\alpha\otimes \beta) \in \maxM_R \left(\amput{\mY}\right)$ for all $\alpha \otimes \beta \in \Sigma^{-1}\left(\cS_{\geq 1} \otimes_R \mY\right)$ as desired.
\end{proof}





\begin{cor}\label{cor:betti}
If $\mI$ is generated by $m$ elements and $\mJ$ by $n$ elements, then for $F := R/\langle \mI', \mI\mJ, \mJ'\rangle$ and $\ell \geq 1$, we have the following formula for Betti numbers.
\begin{align}\label{eqn:desbetti}
\beta_{\ell}^{R}\left( F \right) =& \sum_{t = 1}^{\ell}\left(\beta_t^{R}\left( R/\mI' \right)\binom{n}{\ell - t} + \binom{m}{\ell - t}\beta_t^{R}\left( R/\mJ' \right) \right) \\
 & \qquad + \binom{m + n}{\ell + 1} - \binom{m}{\ell + 1} - \binom{n}{\ell + 1} \notag
\end{align}
In addition, if all the ideals are homogeneous then we get the following formulas for the graded Betti numbers: for $\ell = 0$, we have $\beta_{0,k}^R(F) = \beta_{0,k}^R\left(\frac{R}{\mI\mJ}\right)$, while for $\ell > 0$, we have
\begin{align*}
\beta_{\ell,k}^R(F) =&  \beta_{\ell,k}^R\left(\frac{R}{\mI \mJ}\right) + \sum_{i=1}^{\ell}\sum_{j = 0}^k \beta_{i,j}^R\left(\frac{R}{\mI'}\right)\beta_{\ell - i, k- j}^R\left(\frac{R}{\mJ}\right) \\
 &\qquad +\sum_{i=1}^{\ell}\sum_{j = 0}^k \beta_{i,j}^R\left(\frac{R}{\mI}\right)\beta_{\ell - i, k - j}^R\left(\frac{R}{\mJ'}\right).
\end{align*}
\end{cor}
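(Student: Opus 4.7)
The plan is to read off the Betti numbers directly from the minimal resolution $\Cone{\Omega}$ provided by Theorem \ref{thm:minimal}. Under the hypotheses of the corollary (inherited from Theorem \ref{thm:minimal}), the ideals $\mI$ and $\mJ$ are generated by regular sequences of lengths $m$ and $n$, so their minimal free resolutions $\mX$ and $\mY$ are Koszul complexes with $\mX_p \cong R^{\binom{m}{p}}$ and $\mY_q \cong R^{\binom{n}{q}}$. Since $\Cone{\Omega}$ is minimal, $\beta_\ell^R(F)$ is exactly the $R$-rank of $\Cone{\Omega}_\ell$, which decomposes as
\[
\Cone{\Omega}_\ell \;=\; \bigl(\amput{\mY}\bigr)_\ell \;\oplus\; \bigl(\cS_{\geq 1} \otimes_R \mY\bigr)_\ell \;\oplus\; \bigl(\mX \otimes_R \cT_{\geq 1}\bigr)_\ell,
\]
using $(\Sigma^{-1}C)_{\ell-1} = C_\ell$ for the two shifted summands in the mapping cone.

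The first step is to evaluate each summand's rank separately. For $(\amput{\mY})_\ell$, unfolding Construction \ref{cons} gives $\bigoplus_{p+q=\ell+1,\,p,q\geq 1}\mX_p\otimes_R \mY_q$, so its rank is $\sum_{p=1}^{\ell}\binom{m}{p}\binom{n}{\ell+1-p}$, which by Vandermonde equals $\binom{m+n}{\ell+1}-\binom{m}{\ell+1}-\binom{n}{\ell+1}$ (the missing $p=0$ and $p=\ell+1$ terms are precisely the two subtracted binomials). For the second summand, $(\cS_{\geq 1}\otimes_R \mY)_\ell = \bigoplus_{t+q=\ell,\,t\geq 1}\cS_t\otimes_R \mY_q$, and using $\rank \mY_q = \binom{n}{q}$ together with $\rank \cS_t = \beta_t^R(R/\mI')$ (since $\cS$ is minimal) yields $\sum_{t=1}^{\ell}\beta_t^R(R/\mI')\binom{n}{\ell-t}$. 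The third summand is symmetric. Summing the three contributions produces \eqref{eqn:desbetti}.

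For the graded statement, the same decomposition is used but ranks are now refined by internal degree. The $\ell=0$ case is immediate because $\Cone{\Omega}_0 = (\amput{\mY})_0 = \mX_0\otimes_R \mY_0$ coincides with the degree $0$ piece of the minimal resolution of $R/\mI\mJ$ given by Theorem \ref{thm:resn}. For $\ell>0$, the contribution of $(\amput{\mY})_\ell$ gives $\beta_{\ell,k}^R(R/\mI\mJ)$, while each tensor summand $\cS_i\otimes_R \mY_{\ell-i}$ (resp.\ $\mX_{\ell-i}\otimes_R \cT_i$) contributes, in homological degree $\ell$ and internal degree $k$, the convolution $\sum_{j=0}^k \beta_{i,j}^R(R/\mI')\beta_{\ell-i,k-j}^R(R/\mJ)$ (resp.\ with $\mI\leftrightarrow\mJ$), since tensor products of graded minimal resolutions add internal degrees.

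The argument is essentially bookkeeping once minimality of $\Cone{\Omega}$ is in hand; the only substantive combinatorial ingredient is the Vandermonde identity used to collapse the rank of $(\amput{\mY})_\ell$ into the stated closed form, and the only potential subtlety is correctly tracking the single-degree shift coming from $\Sigma^{-1}$ in the two auxiliary summands of $\Omega$, which is what causes the ranks of $\cS$ and $\cT$ (rather than of their truncations) to appear in \eqref{eqn:desbetti} with index running from $1$ to $\ell$.
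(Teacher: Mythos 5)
Your proof is correct and follows essentially the same route as the paper: both read ranks directly off the minimal complex $\Cone{\Omega}$, use the $R$-module decomposition $\Cone{\Omega}_\ell \cong (\amput{\mY})_\ell \oplus (\cS_{\geq 1}\otimes_R\mY)_\ell \oplus (\mX\otimes_R\cT_{\geq 1})_\ell$, and collapse the star-product contribution via Vandermonde's identity. The paper just factors the bookkeeping slightly differently by first expressing the second and third ranks as $\beta_{\ell-1}^R\bigl(\mI'\otimes_R \tfrac{R}{\mJ}\bigr)$ and $\beta_{\ell-1}^R\bigl(\tfrac{R}{\mI}\otimes_R \mJ'\bigr)$ before expanding them, whereas you expand the truncated tensor products directly -- the same computation.
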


\begin{proof}
Since $\mI$ and $\mJ$ are generated by regular sequences, we have $\beta_i^R\left(\frac{R}{\mI}\right) = \binom{m}{i}$ and $\beta_i^R\left(\frac{R}{\mJ}\right) = \binom{n}{i}$.



We note that $\Cone{\Omega}^{\natural} \cong \left(\amput{\mY}\right)^{\natural} \oplus \left(\cS_{\geq 1} \otimes_R \mY\right)^{\natural} \oplus \left(\mX \otimes_R \cT_{\geq 1}\right)^{\natural}$ as $R$-modules. Combining this observation with the fact that $\Sigma^{-1}\left(\cS_{\geq 1} \otimes_R \mY\right)$ resolves $\mI' \otimes_R \frac{R}{\mJ}$ and $\Sigma^{-1}\left(\mX \otimes_R \cT_{\geq 1}\right)$ resolves $\frac{R}{\mI} \otimes_R \mJ'$, we have
\begin{align}\label{eqn:betti}
\beta_{\ell}^R(F) &= \beta_{\ell}^R\left(\frac{R}{\mI\mJ}\right) + \beta_{\ell - 1}^R\left(\mI' \otimes_R \frac{R}{\mJ}\right) + \beta_{\ell - 1}^R\left(\frac{R}{\mI} \otimes_R \mJ'\right).
\end{align} For $\ell = 0$, the last two terms vanish and the problem reduces to Corollary \ref{cor:bettiprod}.

For the case of $\ell \geq 1$, we address each of three terms in equation \ref{eqn:betti}. For the first term, we combine Vandermonde's identity \begin{align}\label{eqn:van}
\binom{m + n}{r} = \sum_{t = 0}^r \binom{m}{t}\binom{n}{r-t}.
\end{align} with Corollary \ref{cor:bettiprod} to get
\begin{align*}
\beta_{\ell}^R\left(\frac{R}{\mI\mJ}\right) \stackrel{\ref{cor:bettiprod}}{=}& \sum_{i=1}^{\ell}\beta_i^R \left(\frac{R}{\mI}\right) \beta_{\ell + 1 - i}^R \left(\frac{R}{\mJ}\right) \\
 =& \sum_{i=0}^{\ell + 1} \binom{m}{i} \binom{n}{\ell + 1 - i} - \binom{m}{0}\binom{n}{\ell + 1} - \binom{m}{\ell + 1}\binom{n}{0} \\
 \stackrel{\ref{eqn:van}}{=} & \binom{m + n}{\ell + 1} - \binom{m}{\ell + 1} - \binom{n}{\ell + 1}.
\end{align*}


For the other two terms, we make use of the $\operatorname{Tor}$-independence of our ideals. Moreover, by construction we have
\begin{align*}
\beta_{\ell - 1}^R\left(\mI' \otimes_R \frac{R}{\mJ}\right) = & \sum_{i = 0}^{\ell - 1} \beta_i^R(\mI')\beta_{\ell - 1 - i}^R\left(\frac{R}{\mJ}\right) \\
 = & \sum_{i = 0}^{\ell - 1}\beta_{i + 1}^R\left(\frac{R}{\mI'}\right)\binom{n}{\ell - (i + 1)} \\
 = & \sum_{i = 1}^{\ell}\beta_i^R\left(\frac{R}{\mI'}\right) \binom{n}{\ell - i}.
\end{align*}
The same reasoning is used to obtain the desired formula $\beta_{\ell - 1}^R\left(\frac{R}{\mI} \otimes_R \mJ'\right)$. Substituting these into equation \ref{eqn:betti} yields equation \ref{eqn:desbetti}, which is our desired result.

For the graded Betti numbers, we note that there is an analogous expression of equation \ref{eqn:betti}. For $\ell > 0$, the argument follows the non-graded case using the formula \[\beta_{\ell - 1,k}^R\left(\mI' \otimes_R \frac{R}{\mJ}\right) = \sum_{i = 0}^{\ell - 1} \sum_{j = 0}^k \beta_{i,j}^R(\mI')\beta_{\ell - 1 - i, k - j}^R\left(\frac{R}{\mJ}\right). \] The case of $\ell = 0$ reduces to Corollary \ref{cor:bettiprod}.  \end{proof}


We now obtain the Poincar\'{e} series portion of Theorem \ref{thm:main}. In the case of fiber products, setting $\mI = \langle \vv{x} \rangle$ and $\mJ = \langle \vv{y} \rangle$ with $R = k[\vv{x},\vv{y}]$ (or its completion) recovers \cite[Corollary 5.1.3]{MR2711725}.


\begin{cor}\label{cor:poin}
The Poincar\'{e} series of $F:= R/\langle \mI', \mI \mJ, \mJ' \rangle$ satisfies
\begin{align}\label{eqn:pon1}
\frac{P_{F}^{R}(t) - P_{R/(\mI' + \mJ)}^{R}(t) - P_{R/(\mI + \mJ')}^{R}(t) + P_{R/(\mI + \mJ)}^{R}(t)}{P_{ \mI\mJ }^{R}(t)} = t(1+t)
\end{align}
and
\begin{align}\label{eqn:pon2}
\frac{P_{F}^{R}(t) - (1+t)^n P_{R/\mI'}^{R}(t) - (1+t)^m P_{R/\mJ'}^{R}(t) + (1 + t)^{m + n}}{\left( (1 + t)^m - 1 \right) \left( (1 + t)^n - 1 \right)} = \frac{t + 1}{t}.
\end{align}
\end{cor}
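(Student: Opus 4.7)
The engine of the proof is Theorem \ref{thm:minimal}: under the standing hypotheses of Corollary \ref{cor:poin}, the complex $\Cone{\Omega}$ from Theorem \ref{thm:genres} is a \emph{minimal} free resolution of $F$ over $R$. Consequently $P_F^R(t) = \PP_{\Cone{\Omega}}^R(t)$, and my plan is to compute this generating function explicitly in terms of $P_{R/\mI}^R$, $P_{R/\mJ}^R$, $P_{R/\mI'}^R$, $P_{R/\mJ'}^R$, and then verify both identities by direct algebraic manipulation.

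The first step is to observe the splitting of underlying graded modules
\[\Cone{\Omega}^{\natural} \cong \left(\amput{\mY}\right)^{\natural} \oplus \left(\cS_{\geq 1}\otimes_R\mY\right)^{\natural} \oplus \left(\mX\otimes_R\cT_{\geq 1}\right)^{\natural},\]
in which the $\Sigma^{-1}$ sitting on the domain of $\Omega$ cancels the $\Sigma$ built into the mapping cone. Taking generating functions, Corollary \ref{cor:pon} handles the first summand as $1 + \tfrac{1}{t}(P_{R/\mI}^R - 1)(P_{R/\mJ}^R - 1) = P_{R/\mI\mJ}^R$, while hard truncation in degrees $\geq 1$ removes a single degree-zero generator and tensoring with a minimal resolution multiplies generating functions, so
\[\PP_{\cS_{\geq 1}\otimes_R \mY}^R(t) = (P_{R/\mI'}^R - 1)\,P_{R/\mJ}^R, \qquad \PP_{\mX\otimes_R \cT_{\geq 1}}^R(t) = P_{R/\mI}^R\,(P_{R/\mJ'}^R - 1).\]
Adding these three expressions yields a closed form for $P_F^R(t)$.

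For identity \eqref{eqn:pon1}, I use the Tor-independence hypotheses from Notation \ref{sec4notation} to conclude that $\mX\otimes_R\mY$, $\cS\otimes_R\mY$, and $\mX\otimes_R\cT$ are minimal free resolutions of $R/(\mI+\mJ)$, $R/(\mI'+\mJ)$, and $R/(\mI+\mJ')$ respectively, so each of those Poincar\'e series factors as the corresponding product of two Poincar\'e series. Together with $P_{\mI\mJ}^R(t) = \tfrac{1}{t}(P_{R/\mI\mJ}^R(t)-1) = \tfrac{1}{t^2}(P_{R/\mI}^R-1)(P_{R/\mJ}^R-1)$ from the short exact sequence $0 \to \mI\mJ \to R \to R/\mI\mJ \to 0$, substitution into the numerator of \eqref{eqn:pon1} causes all terms involving $P_{R/\mI'}^R$ and $P_{R/\mJ'}^R$ to cancel, leaving $\tfrac{1+t}{t}(P_{R/\mI}^R-1)(P_{R/\mJ}^R-1)$; dividing by $P_{\mI\mJ}^R$ yields $t(1+t)$ as required.

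For identity \eqref{eqn:pon2}, the hypothesis of Theorem \ref{thm:minimal} that $\mI$ and $\mJ$ are generated by regular sequences of lengths $m$ and $n$ forces $P_{R/\mI}^R(t) = (1+t)^m$ and $P_{R/\mJ}^R(t) = (1+t)^n$ (Koszul complexes). Substituting these into the closed form for $P_F^R$ and performing an analogous cancellation reduces the numerator of \eqref{eqn:pon2} to $\tfrac{1+t}{t}((1+t)^m - 1)((1+t)^n - 1)$, from which the identity is immediate. I expect no genuine obstacle: the only slightly delicate point is verifying the splitting of $\Cone{\Omega}^{\natural}$ and correctly tracking the $\Sigma^{-1}$ shifts, after which everything is formal power-series algebra.
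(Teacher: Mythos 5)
Your proposal is correct and follows essentially the same route as the paper's proof: decompose $\Cone{\Omega}^{\natural}$ additively, compute generating functions via truncation and tensor product, invoke Tor-independence and minimality to factor the Poincar\'e series $P_{R/(\mI'+\mJ)}^R = P_{R/\mI'}^R P_{R/\mJ}^R$ (and analogues), use Corollary~\ref{cor:pon} together with $P_{R/\mI\mJ}^R = 1 + t\,P_{\mI\mJ}^R$ to collapse the numerator, and specialize to Koszul complexes for~\eqref{eqn:pon2}. Your bookkeeping of the $\Sigma^{-1}$/$\Sigma$ cancellation in the cone and the identity $t^2 P_{\mI\mJ}^R = (P_{R/\mI}^R - 1)(P_{R/\mJ}^R - 1)$ match the paper's equation~\eqref{align:denom}, so the argument is sound.
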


\begin{proof}
The unlabeled equalities in the following display are by definition.
\begin{align*}
P_F^{R}(t) &= \PP_{\amput{\mY}}^{R}(t) + \PP_{\cS_{\geq 1} \otimes_R \mY}^{R}(t) + \PP_{\mX \otimes_R \cT_{\geq 1}}^{R}(t) \\
 &= P_{R/ \mI\mJ }^{R}(t) + \PP_{\cS_{\geq 1}}^{R}(t) \PP_{\mY}^{R}(t) + \PP_{\mX}^{R}(t) \PP_{\cT_{\geq 1}}^{R}(t) \\
 &= 1 + t \cdot P_{ \mI\mJ }^{R}(t) + (P_{R/\mI'}^{R}(t) - 1)P_{R/\mJ}^{R}(t)  + P_{R/\mI}^{R}(t)  (P_{R/\mJ'}^{R}(t) - 1).
\end{align*}

Rearranging and applying Corollary \ref{cor:pon} yields
\begin{align}\label{align:poin}
P_F^{R}(t) - P_{R/\mI'}^{R}(t) P_{R/\mJ}^{R}(t) - P_{R/\mI}^{R}(t) P_{R/\mJ'}^{R}(t) \hspace{-2cm}  \\
 &= 1 + t \cdot P_{ \mI \mJ }^{R}(t)  - P_{R/\mJ}^{R}(t) - P_{R/\mI}^{R}(t). \notag
\end{align}
Our Tor-independence assumptions in Notation \ref{sec4notation} guarantee the following equations
\begin{align}\label{align:denom}
P_{R/\mI'}^{R}(t)P_{R/\mJ}^{R}(t) &= P_{R/(\mI' + \mJ)}^{R}(t) \notag \\
P_{R/\mI}^{R}(t) P_{R/\mJ}^{R}(t) &= P_{R/(\mI + \mJ')}^{R}(t) \notag \\
P_{R/\mI}^R(t) P_{R/\mJ}^{R}(t) &= P_{R/(\mI + \mJ)}^{R}(t) \notag\\
t^2 \cdot P_{ \mI \mJ }^{R}(t) &= \left(P_{R/\mI}^{R}(t) - 1 \right)\left(P_{R/\mJ}^{R}(t) - 1 \right).
\end{align}
Combining these four equations with equation \ref{align:poin}, yields
\begin{align*}
P_F^{R}(t) - P_{R/(\mI' + \mJ)}^{R}(t) - P_{R/(\mI + \mJ')}^{R}(t) &= t P_{ \mI \mJ }^{R}(t) + t^2 P_{ \mI \mJ }^{R}(t) - P_{R/(\mI + \mJ)}^{R}(t).
\end{align*}
Isolating $t(1+t)$ on the right-hand side provides equation \ref{eqn:pon1}.

To get equation \ref{eqn:pon2}, recall that $\mI$ and $\mJ$ are generated by a regular sequence and thus $\mX$ and $\mY$ are Koszul complexes. It follows that $P_{R/\mI}^{R}(t) = (1 + t)^m$ and $P_{R/\mJ}^{R}(t) = (1 + t)^n$. These expessions and equation \ref{align:denom} combine with equation \ref{eqn:pon1} to produce equation \ref{eqn:pon2}.\end{proof}

Lastly, we explicitly state how to combine Example \ref{Thm:TorInd} with Theorem \ref{thm:minimal} to obtain Theorem \ref{thm:main}.

\begin{cor}\label{cor:fiber}
Suppose the rings $R$, $A$, and $B$ satisfy either case of Example \ref{Thm:TorInd}. If $I' \subseteq \maxM_A^2$ and $J' \subseteq \maxM_B^2$, then $\Cone{\Omega}$ is a minimal free resolution over $R$ of $\frac{A}{I'} \times_k \frac{B}{J'} \cong \frac{R}{\langle \mI', \vv{x}\vv{y}, \mJ' \rangle}$ where $\mI' = I' \otimes_A R$ and $\mJ' = J' \otimes_B R$.
\end{cor}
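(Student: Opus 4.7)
The plan is to assemble this corollary by identifying $\mI$ and $\mJ$ appropriately and verifying each hypothesis of Theorem \ref{thm:minimal}, after which Theorem \ref{thm:genres} and Example \ref{ex:fiber} will close out the argument. Specifically, I would set $\mI = \langle \vv{x} \rangle$ and $\mJ = \langle \vv{y} \rangle$ in $R$. These are ideals generated by regular sequences since $\vv{x}, \vv{y}$ form a regular sequence on $R$ in either of the two cases of Example \ref{Thm:TorInd}, so $\mX$ and $\mY$ can be taken to be the corresponding Koszul complexes, which are minimal.

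Next, I would verify the containment hypotheses. Since $\maxM_A = \langle \vv{x}\rangle \subset A$ and $\maxM_B = \langle \vv{y}\rangle \subset B$, the assumption $I' \subseteq \maxM_A^2$ gives
\[ \mI' = I' \otimes_A R = I' R \subseteq \langle \vv{x}\rangle^2 R = \mI^2, \]
and symmetrically $\mJ' \subseteq \mJ^2$. I would pick $\cS$ and $\cT$ to be minimal free resolutions of $R/\mI'$ and $R/\mJ'$ over $R$. For the Tor-independence hypothesis of Notation \ref{sec4notation}\eqref{item:TorInd}, the three pairs $\{\mI,\mJ\}$, $\{\mI',\mJ\}$, $\{\mI,\mJ'\}$ are obtained by extending ideals from $A$ and $B$ to $R$: for instance, $\mI' = I' \otimes_A R$ is extended from $A$ while $\mJ = \langle \vv{y}\rangle = \maxM_B R$ is extended from $B$. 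Example \ref{Thm:TorInd} therefore supplies the required Tor-independence in each case.

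With all hypotheses of Theorem \ref{thm:minimal} verified, I can conclude that $\Cone{\Omega}$ is a minimal free resolution over $R$. By Theorem \ref{thm:genres}, the module it resolves is
\[ \frac{R}{\langle \mI', \mI\mJ, \mJ'\rangle} = \frac{R}{\langle \mI', \vv{x}\vv{y}, \mJ'\rangle}. \]
Finally, Example \ref{ex:fiber} identifies this quotient with the fiber product $\frac{A}{I'} \times_k \frac{B}{J'}$, yielding the desired conclusion.

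There is no real obstacle here, as the corollary is essentially a synthesis: the technical work of establishing minimality is done in Theorem \ref{thm:minimal}, the Tor-independence is handled in Example \ref{Thm:TorInd}, and the fiber product identification lives in Example \ref{ex:fiber}. The only thing requiring care is bookkeeping of the hypotheses, in particular verifying that $I' \subseteq \maxM_A^2$ lifts via extension of scalars to $\mI' \subseteq \mI^2$, which is immediate because extension of ideals commutes with products.
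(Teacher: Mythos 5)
Your proposal is correct and follows exactly the intended synthesis: the paper offers no separate proof of this corollary, instead noting that it results from combining Example \ref{Thm:TorInd} (for the Tor-independence hypotheses) with Theorem \ref{thm:minimal} (for minimality), plus Theorem \ref{thm:genres} and Example \ref{ex:fiber} for what is resolved. Your bookkeeping — choosing $\mI = \langle \vv{x}\rangle$, $\mJ = \langle \vv{y}\rangle$, observing that $\mX$, $\mY$ are Koszul complexes, translating $I' \subseteq \maxM_A^2$ into $\mI' \subseteq \mI^2$, and identifying each of the three required Tor-independent pairs as extensions from $A$ and $B$ respectively — fills in precisely the details the paper leaves implicit.
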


\section*{Acknowledgements}

A special thank you to my Ph.D. advisor, Dr. Sean Sather-Wagstaff, for their guidance while working on this paper. I also want to thank Frank Moore, Rachel Diethorn, and Keller Vandebogert for pointing me in the direction of various helpful sources.

\bibliographystyle{plain}

\end{document}